\numberwithin{equation}{section}
\theoremstyle{plain}
\newtheorem{theorem}{Theorem}[section]
\newtheorem{proposition}[theorem]{Proposition}
\newtheorem{corollary}[theorem]{Corollary}
\newtheorem{lemma}[theorem]{Lemma}
\newtheorem{remark}[theorem]{Remark}
\newcommand{\bfx}{{\mathbf x}}
\newcommand{\bfC}{{\mathbf C}}
\newcommand{\bfR}{{\mathbf R}}
\newcommand{\bari}{{\overline i}}
\newcommand{\barj}{{\overline j}}
\newcommand{\bark}{{\overline k}}
\newcommand{\barl}{{\overline \ell}}
\newcommand{\barp}{{\overline p}}
\newcommand{\barq}{{\overline q}}
\newcommand{\baralp}{{\overline \alpha}}
\newcommand{\barbet}{{\overline \beta}}
\newcommand{\barvarphi}{{\overline{\varphi}}}
\newcommand{\barpartial}{{\overline \partial}}
\def\bp{\overline{\partial}}
\newcommand{\barA}{\overline {A}}
\newcommand{\mapright}[1]{\smash{\mathop{   \hbox to 0.7cm{\rightarrowfill}}
  \limits^{#1}}}
\newcommand{\Ric}{\operatorname{Ric}}
\newcommand{\grad}{\mathrm{grad}}
\newcommand{\diver}{\mathrm{div}}
\def\tr{{\hbox{tr}}}
\def\g{{\mathfrak g}}
\newcommand{\pa}{\partial}
\newcommand{\ga}{\alpha}
\newcommand{\gb}{\beta}
\newcommand{\ka}{K\"ahler }
\newcommand{\ke}{K\"ahler-Einstein }
\newcommand{\lb}{\left (}
\newcommand{\rb}{\right )}
\newcommand{\lsb}{\left [}
\newcommand{\rsb}{\right ]}
\newcommand{\J}{{\mathcal J}}
\newcommand{\rc}{\text{Ric}}
\newcommand{\ii}{\sqrt{-1}}
\newcommand{\dd}{\text{div}}
\newcommand{\R}{{\mathbb R}}
\newcommand{\dt}{\frac{d}{dt}\bigg |_{t=0}}
\title{Hodge Laplacian and geometry of Kuranishi family of Fano manifolds}
\author{Akito Futaki, Xiaofeng Sun and Yingying Zhang}
\address{Yau Mathematical Sciences Center, Tsinghua University, Haidian district, Beijing 100084, China}
\email{futaki@tsinghua.edu.cn}
\address{Department of Mathematics, Lehigh University, Bethlehem, PA 18015, USA}
\email{xis205@lehigh.edu}
\address{Yau Mathematical Sciences Center, Tsinghua University, Haidian district, Beijing 100084, China}
\email{zhangyingying@bimsa.cn}
\date{January 4, 2023}
\begin{document}

\begin{abstract}
We first obtain eigenvalue estimates for the Hodge Laplacian on Fano manifolds, which follow from
the Bochner-Kodaira formula. Then we apply it to study the geometry of the Kuranishi family of deformations of
Fano manifolds. We show that the original K\"ahler form remains to be a K\"ahler form for other members
of the Kuranishi family, and give an explicit formula of the Ricci potential. 
We also show that our set-up gives another account for  the Donaldson-Fujiki picture. 
\end{abstract}

\maketitle

\footnotetext[1]{YYZ is supported in part by NSFC Grant No. 12141101.}

\section{Introduction}
Deformation of complex structures was initially studied by Riemann in his work of studying the Abelian functions.
Max Noether considered the complex deformation of algebraic surfaces (\cite{Kodaira86}, Preface). 
The deformation of higher dimensional compact complex manifolds has been developed intensively by a series
work of Kodaira, Spencer, Nirenberg and Kuranishi in 1950's - 1960'. The deformation problem plays an important
role in understanding the local moduli theory of complex manifolds. On the other hand, the existence of canonical metrics on
complex manifolds is an important component in understanding the structure of the moduli spaces and metrics on them.
The study of Weil-Petersson metric on moduli spaces of hyperbolic Riemann surfaces and K\"ahler-Einstein manifolds involves the
further development of the Kodaira-Spencer-Kuranishi theory on such manifolds. In this direction, one of the basic questions is whether the desired canonical metrics on the original manifold is compatible with the complex deformation. This enables us to
study the local geometry of the parameter spaces.

In this paper we study the geometry of the Kuranishi family of deformations of Fano manifolds. For this purpose we first
obtain eigenvalue estimates for the Hodge Laplacian using the formula \eqref{Boch10}, which we call the {\it Bochner-Kodaira formula},
used in the proof of the Kodaira vanishing theorem \cite{Kodaira53}.
Let $M$ be a Fano manifold of dimension $m$, $\omega$ a K\"ahler form in $2\pi c_1(M)$ and $\Ric$ the Ricci form
of $\omega$. Since both $\omega$ and $\Ric$ represent $2\pi c_1(M)$ 
there exists a real smooth function $f$, called the Ricci potential, such that
$$
 \Ric\, -\, \omega = \sqrt{-1}\partial\barpartial f.
$$
Consider the weighted Hodge Laplacian $\Delta_f = \barpartial_f^\ast\barpartial + \barpartial\,\barpartial_f^\ast$ acting on 
differential forms of type $(0,q)$
where $\barpartial_f^\ast$ is the formal adjoint of $\barpartial$ with respect to the weighted $L^2$-inner product
$\int_M (\cdot,\cdot) e^f \omega^m$.
\begin{theorem}\label{ThmA}
Let $M$ be a Fano manifold and $\Delta_f$ be the weighted Hodge Laplacian as above. \\
(1)\ \ If $\Delta_f \eta = \lambda \eta$ and $\eta \ne 0$ for a $(0,q)$-form $\eta$ then $\lambda \ge q$.\\
(2)\ \ If, in (1), $\lambda = q$ and $\eta \ne 0$ then $\nabla^{\prime\prime}\eta = 0$. In particular $\eta$ is closed, and
for $q\ge 1$, $\eta$ is exact, and indeed, it is expressed as $\eta = \frac1{q}\barpartial(\barpartial_f^\ast\eta)$.
\end{theorem}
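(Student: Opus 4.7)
The plan is to derive a weighted Bochner-Kodaira identity of the shape
\begin{equation*}
\Delta_f \eta \;=\; D^\ast_f D\,\eta \;+\; q\,\eta
\end{equation*}
on $(0,q)$-forms, where $D = \nabla''$ is the $(0,1)$-part of the Levi-Civita connection and $D^\ast_f$ is its formal adjoint in the weighted $L^2$-inner product $\int_M (\cdot,\cdot)\,e^f\omega^m$. Both statements of the theorem are then immediate consequences of pairing this identity with $\eta$.

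To obtain the identity, I would start from the unweighted Bochner-Kodaira formula \eqref{Boch10}, which for a $(0,q)$-form reads
\begin{equation*}
\barpartial^\ast\barpartial\,\eta + \barpartial\,\barpartial^\ast\eta \;=\; D^\ast D\,\eta + R(\eta),
\end{equation*}
where $R$ is the curvature operator whose pointwise pairing with $\eta$ contracts the Ricci form against the $q$ antiholomorphic indices of $\eta$. To pass to the weighted setting, I would express $\barpartial_f^\ast = \barpartial^\ast - \iota_{(\barpartial f)^\sharp}$, and likewise $D^\ast_f = D^\ast - \iota_{(\barpartial f)^\sharp}$, by integration by parts against $e^f \omega^m$. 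Substituting these into the unweighted identity, the drift corrections from the two adjoints assemble into the contraction of $\sqrt{-1}\partial\barpartial f$ with $\eta$, which combines with $R(\eta)$ into the contraction of $\Ric - \sqrt{-1}\partial\barpartial f$ with $\eta$. The Ricci potential equation identifies this with the contraction of $\omega$ against the $q$ antiholomorphic indices of $\eta$, which collapses pointwise to $q\,\eta$.

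Granted the displayed identity, part (1) follows from
\begin{equation*}
\lambda\,\|\eta\|_f^2 \;=\; \langle \Delta_f \eta, \eta\rangle_f \;=\; \|D\eta\|_f^2 + q\,\|\eta\|_f^2 \;\geq\; q\,\|\eta\|_f^2,
\end{equation*}
so $\lambda \geq q$ whenever $\eta \neq 0$. For part (2), equality forces $D\eta = \nabla''\eta = 0$; since $\barpartial\eta$ is the antisymmetrization of $\nabla''\eta$ in its covariant indices, we obtain $\barpartial \eta = 0$. Substituting back into $\Delta_f \eta = q\eta$ leaves $\barpartial\,\barpartial_f^\ast \eta = q\,\eta$, so for $q \ge 1$ we conclude $\eta = \tfrac{1}{q}\barpartial(\barpartial_f^\ast \eta)$, which exhibits $\eta$ as exact.

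The main obstacle is the careful bookkeeping in the weighted Bochner-Kodaira formula: one must verify that when the drift terms from $\barpartial_f^\ast$ and $D^\ast_f$ are collected and combined with the unweighted curvature term $R$, the net contribution is precisely the contraction of $\Ric - \sqrt{-1}\partial\barpartial f$ with $\eta$, with the signs arranged so that the Ricci potential equation closes the identity to $q\,\eta$ on $(0,q)$-forms. This sign and contraction check is exactly where the Fano hypothesis and the specific choice of weight $e^f$ enter.
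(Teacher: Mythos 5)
Your proposal is correct and follows essentially the same route as the paper: the paper obtains the pointwise identity $(\Delta_f\eta)_{\barj_1\cdots\barj_q}=-g^{i\barj}\nabla_{i,f}\nabla_\barj\eta_{\barj_1\cdots\barj_q}+q\,\eta_{\barj_1\cdots\barj_q}$ by specializing the $L$-valued Bochner--Kodaira formula to the trivial bundle with Hermitian metric $e^f$ (so that $R_{i\barj}+\psi_{i\barj}=R_{i\barj}-f_{i\barj}=g_{i\barj}$), which is the same bookkeeping as your drift-correction argument, and then pairs with $\eta$ in the weighted $L^2$-inner product exactly as you do. Your derivation of $\eta=\frac1q\barpartial(\barpartial_f^\ast\eta)$ directly from $q\eta=\Delta_f\eta=\barpartial\,\barpartial_f^\ast\eta$ once $\barpartial\eta=0$ is the intended reading of the final clause and is, if anything, slightly more self-contained than the paper's appeal to $H^{0,q}_\barpartial(M)=0$.
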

\begin{theorem}\label{ThmB}
Let $M$ be a Fano manifold and $\Delta_f$ be the weighted Hodge Laplacian as above. \\
(1)\ \ If $\Delta_f \eta = \lambda \eta$ for a $(0,q)$-form $\eta$ and $\barpartial \eta \ne 0$ then $\lambda \ge q+1$.\\
(2)\ \ If, in (1), $\lambda = q+1$ then 
\begin{equation} \label{Boch13}
(\barpartial \eta)^\sharp := g^{i\barj}g^{i_1 \barj_1} \cdots g^{i_q\barj_q}\, \nabla_\barj\, \eta_{\barj_1 \cdots \barj_q}\, \frac{\partial}{\partial z^i} \wedge 
\frac{\partial}{\partial z^{i_1}} \wedge \cdots \wedge  \frac{\partial}{\partial z^{i_q}}
\end{equation}
is a holomorphic section of $\wedge^{q+1}T^\prime M$.
\end{theorem}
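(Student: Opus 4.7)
The plan is to reduce Theorem~\ref{ThmB} to Theorem~\ref{ThmA} by applying the latter to the $(0,q+1)$-form $\barpartial\eta$. The key observation is that $\Delta_f$ commutes with $\barpartial$: from $\Delta_f=\barpartial_f^{\ast}\barpartial+\barpartial\,\barpartial_f^{\ast}$ and $\barpartial^2=0$ one reads $\barpartial\Delta_f=\barpartial\,\barpartial_f^{\ast}\barpartial=\Delta_f\barpartial$. Hence $\Delta_f\eta=\lambda\eta$ forces $\Delta_f(\barpartial\eta)=\lambda(\barpartial\eta)$, so $\barpartial\eta$ is a $(0,q+1)$-eigenform of $\Delta_f$ with the same eigenvalue.

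For part (1), Theorem~\ref{ThmA}(1) applied with $q$ replaced by $q+1$ to the nonzero form $\barpartial\eta$ immediately yields $\lambda\ge q+1$. For part (2), in the equality case $\lambda=q+1$ the corresponding statement in Theorem~\ref{ThmA}(2) gives $\nabla''(\barpartial\eta)=0$, i.e.\ $\nabla_{\bar k}(\barpartial\eta)_{\bar j\bar j_1\cdots\bar j_q}=0$ for every $\bar k$.

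It then remains to interpret this vanishing as the holomorphicity of the multivector $(\barpartial\eta)^\sharp$ defined in \eqref{Boch13}. Since the Kähler metric satisfies $\nabla g^{i\bar j}=0$, the musical isomorphism $d\bar z^j\mapsto g^{i\bar j}\,\partial/\partial z^i$ intertwines $\nabla''$ on $(0,q+1)$-forms with the $\barpartial$-operator of the Chern connection on $\wedge^{q+1}T'M$; equivalently, raising all antiholomorphic indices with $g^{i\bar j}$ commutes with $\nabla_{\bar k}$. Therefore $\nabla''(\barpartial\eta)=0$ is equivalent to $\barpartial\bigl((\barpartial\eta)^\sharp\bigr)=0$, so $(\barpartial\eta)^\sharp$ is a holomorphic section of $\wedge^{q+1}T'M$. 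The only subtlety, which is bookkeeping rather than a true obstacle, lies in this intertwining step; everything else is a direct invocation of Theorem~\ref{ThmA}.
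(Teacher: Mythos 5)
Your proof is correct and follows essentially the same route as the paper: commute $\Delta_f$ with $\barpartial$, apply Theorem \ref{ThmA} to the $(0,q+1)$-eigenform $\barpartial\eta$, and in the equality case translate $\nabla''(\barpartial\eta)=0$ into holomorphicity of $(\barpartial\eta)^\sharp$ via parallelism of the metric. The extra detail you supply on the commutation and on the index-raising step is accurate but not needed beyond what the paper records.
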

\begin{corollary}[Theorem 2.4.3 in \cite{futaki88}]\label{CorC}
Let $M$ be a Fano manifold.\\
(1)\ \ If $\Delta_f u = \lambda u$ for a non-constant complex-valued smooth function $u$ then $\lambda \ge 1$.\\
(2)\ \ If, in (1), $\lambda = 1$ then $(\barpartial u)^\sharp$ is a holomorphic vector field.
\end{corollary}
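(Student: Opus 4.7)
The plan is to derive the corollary as the $q=0$ specialization of Theorem \ref{ThmB}, since a complex-valued smooth function is precisely a $(0,0)$-form.

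For part (1), I would observe that $u$ being non-constant is equivalent to $\barpartial u \ne 0$. Applying Theorem \ref{ThmB}(1) with $q=0$ and $\eta = u$, the hypothesis $\Delta_f u = \lambda u$ together with $\barpartial u \ne 0$ immediately yields $\lambda \ge q+1 = 1$.

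For part (2), assuming $\lambda = 1 = q+1$ with $q=0$, I would invoke Theorem \ref{ThmB}(2) with $\eta = u$. The formula \eqref{Boch13} in this case collapses to
\begin{equation*}
(\barpartial u)^\sharp \;=\; g^{i\barj}\, \nabla_\barj u\, \frac{\partial}{\partial z^i},
\end{equation*}
which is a section of $\wedge^{1} T^\prime M = T^\prime M$, i.e., a vector field. Theorem \ref{ThmB}(2) asserts that this section is holomorphic, which is exactly the statement that $(\barpartial u)^\sharp$ is a holomorphic vector field.

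Since the corollary reduces entirely to quoting Theorem \ref{ThmB} with $q=0$, there is no real obstacle; the only thing to check is the bookkeeping that $\wedge^1 T^\prime M$ coincides with $T^\prime M$ and that the raising-of-index conventions match those used in $(\barpartial u)^\sharp$ in the standard sense of the gradient of a function with respect to $g$.
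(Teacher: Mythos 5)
Your derivation is exactly the paper's intended route: the corollary is stated immediately after Theorem \ref{ThmB} precisely as its $q=0$ specialization, with non-constancy of $u$ on the compact connected manifold $M$ equivalent to $\barpartial u \ne 0$ (a function with $\barpartial u=0$ is holomorphic, hence constant). The bookkeeping that $\wedge^1 T^\prime M = T^\prime M$ and that \eqref{Boch13} reduces to the metric gradient is correct, so the proposal matches the paper's proof.
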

\noindent
This observation was the starting point to find the obstruction to the existence of K\"ahler-Einstein metrics in \cite{futaki83.1}
inspired by \cite{KJ-FW1}, \cite{KJ}, see also \cite{futaki87}, \cite{Futaki15}. 
Similar results for coupled K\"ahler metrics are given in section 6, see Theorem \ref{ThmH}, Theorem \ref{ThmI}
and Corollary \ref{CorJ}. 

We apply Theorem \ref{ThmA} to the study of the geometry of the Kuranishi family.
We consider the Kuranishi family described by a family of vector valued $1$-forms 
$$\varphi(t) = \sum_{i=1}^k t^i\varphi_i + \sum_{|I|\ge2}t^I\varphi_I\ \in\ A^{0,1}(T^\prime M)$$
such that 
\begin{equation}\label{Kura}
\begin{cases}
 \barpartial\varphi(t) = \frac12 [\varphi(t),\varphi(t)];\\
 \barpartial^\ast_f\, \varphi(t) = 0;\\
\text{for\ all\ }i, \varphi_i \text{\ is}\ \Delta_f\text{-harmonic.}
\end{cases}
\end{equation}
See section 3 for more detail about this description.
\begin{theorem}\label{ThmD}
Let $M$ be a Fano manifold, $\omega$ a K\"ahler form in $2\pi c_1(M)$, and $\{M_t\}$ be the Kuranishi family
of the deformation of complex structures described by \eqref{Kura}.
Then $\omega$ is a K\"ahler form on $M_t$ for any $t$.
\end{theorem}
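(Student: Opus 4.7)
The plan is first to reformulate the claim. For $\omega$ to remain a K\"ahler form on $M_t$ one needs (i) $d\omega=0$, (ii) $J_t$-positivity, and (iii) type $(1,1)$ with respect to $J_t$. Condition (i) is automatic because $\omega$ is a fixed closed real $2$-form on the underlying smooth manifold; condition (ii) is an open condition that holds at $t=0$ and thus persists after possibly shrinking the Kuranishi parameter. The substance is condition (iii). A direct computation, writing the new $(0,1)$-tangent space as the graph of $\varphi(t)\colon T^{\prime\prime}M\to T^\prime M$, shows that the $(0,2)$-part of $\omega$ for $J_t$ is the $(0,2)$-form
\begin{equation*}
\iota(\varphi(t))_{\bar a\bar b}:=g_{i\bar a}\varphi(t)^i_{\bar b}-g_{i\bar b}\varphi(t)^i_{\bar a}
\end{equation*}
on the original $M$, and the dependence is linear in $\varphi(t)$ because the other contributions vanish for bidegree reasons. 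Thus Theorem \ref{ThmD} reduces to showing $\iota(\varphi(t))\equiv 0$.

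I would extend $\iota$ to a map $\iota\colon A^{0,q}(T^\prime M)\to A^{0,q+1}(M)$ by the same contraction rule. Since $\omega$ is parallel on a K\"ahler manifold, $\iota$ commutes with the Chern connection and therefore with $\barpartial$, $\barpartial_f^\ast$, and $\Delta_f$. Expanding $\varphi(t)=\sum_I t^I\varphi_I$, it suffices to show $\iota(\varphi_I)=0$ for every multi-index $I$, which I would prove by induction on $|I|$. For $|I|=1$: $\varphi_i$ is $\Delta_f$-harmonic by \eqref{Kura}, so $\iota(\varphi_i)$ is a $\Delta_f$-harmonic $(0,2)$-form. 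Theorem \ref{ThmA}(1) with $q=2$ requires $\lambda\ge 2$ for any nonzero eigenform, so $\lambda=0$ forces $\iota(\varphi_i)=0$.

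For the inductive step with $n\ge 2$, let $\varphi_{(n)}:=\sum_{|I|=n}t^I\varphi_I$ and extract the order-$n$ component of $\barpartial\varphi=\tfrac12[\varphi,\varphi]$, giving $\barpartial\varphi_{(n)}=\tfrac12\sum_{j+k=n}[\varphi_{(j)},\varphi_{(k)}]$. Applying $\iota$ reduces matters to showing each $\iota([\varphi_{(j)},\varphi_{(k)}])$ vanishes, which is the algebraic lemma: \emph{if $\iota(\varphi)=\iota(\psi)=0$ then $\iota([\varphi,\psi])=0$} for $\varphi,\psi\in A^{0,1}(T^\prime M)$. This lemma is the main obstacle. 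I would verify it in K\"ahler normal coordinates at a point by rewriting the Nijenhuis bracket via the Chern connection (the $\Gamma$-terms cancel by symmetry $\Gamma^i_{kl}=\Gamma^i_{lk}$), lowering the $T^\prime M$-index with $g$, and antisymmetrizing over the three $(0,1)$-indices; the resulting cyclic sum pairs off term by term using the pointwise symmetries $\varphi^a_{\bar b}=\varphi^b_{\bar a}$ and $\psi^a_{\bar b}=\psi^b_{\bar a}$ encoded by $\iota(\varphi)=\iota(\psi)=0$. Granting the lemma, $\barpartial\iota(\varphi_{(n)})=0$ by the inductive hypothesis, the gauge condition in \eqref{Kura} yields $\barpartial_f^\ast\iota(\varphi_{(n)})=0$, so $\iota(\varphi_{(n)})$ is $\Delta_f$-harmonic and Theorem \ref{ThmA}(1) forces it to vanish. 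This completes the induction, giving $\iota(\varphi(t))\equiv 0$ and establishing (iii); together with (i) and (ii), $\omega$ is a K\"ahler form on $M_t$.
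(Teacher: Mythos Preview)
Your overall strategy---reducing to the vanishing of $\iota(\varphi(t))=\varphi(t)\lrcorner\omega$, inducting on $|I|$, and invoking Theorem~\ref{ThmA} with $q=2$---matches the paper's approach, and your algebraic lemma that $\iota(\varphi)=\iota(\psi)=0$ implies $\iota([\varphi,\psi])=0$ is correct and is exactly what the paper uses in the inductive step. However, there is a genuine gap: the assertion that $\iota$ commutes with $\barpartial_f^\ast$ (and hence with $\Delta_f$) is false. While $\omega$ being parallel guarantees that lowering the $T^\prime M$-index commutes with $\nabla$, the operator $\barpartial_f^\ast$ also involves a metric contraction, and after applying $\iota$ there is one additional antiholomorphic index available to contract against. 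Concretely, for $\varphi\in A^{0,1}(T^\prime M)$ with $\barpartial_f^\ast\varphi=0$ one finds
\[
\barpartial_f^\ast(\varphi\lrcorner\omega)_{\barj}\ =\ \sqrt{-1}\,\nabla_f^{\bark}\varphi_{\bark\barj}\ =\ \sqrt{-1}\,(\diver_f\varphi)_{\barj},
\]
which does not vanish unless $\varphi_{\barj\bark}$ is already symmetric, i.e.\ unless $\varphi\lrcorner\omega=0$---precisely what you are trying to prove. So neither your base case (``$\iota(\varphi_i)$ is $\Delta_f$-harmonic'') nor your inductive step (``the gauge condition yields $\barpartial_f^\ast\iota(\varphi_{(n)})=0$'') is justified.

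The paper repairs this via Lemma~\ref{K1}: under the hypothesis $\barpartial_f^\ast\varphi=0$, a direct computation using the Ricci identity together with the Ricci-potential equation $R_{i\barj}-f_{i\barj}=g_{i\barj}$ yields
\[
\barpartial\barpartial_f^\ast(\varphi\lrcorner\omega)\ =\ \sqrt{-1}\,\diver_f(\barpartial\varphi)\ +\ \varphi\lrcorner\omega.
\]
Once $\barpartial(\varphi_I\lrcorner\omega)=0$ and $\diver_f(\barpartial\varphi_I)=0$ are established inductively (the latter from $\diver_f\varphi_J=0$ for $|J|<|I|$, which in turn follows from $\varphi_J\lrcorner\omega=0$ together with the gauge), this gives $\Delta_f(\varphi_I\lrcorner\omega)=\varphi_I\lrcorner\omega$: eigenvalue $1$, not $0$. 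Since $1<2$, Theorem~\ref{ThmA}(1) still forces $\varphi_I\lrcorner\omega=0$. Your induction scheme survives, but the eigenvalue computation genuinely requires the curvature contribution encoded in Lemma~\ref{K1}; the commutation shortcut does not hold.
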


\begin{theorem}\label{ThmE} In the situation of Theorem \ref{ThmD}, the Ricci potential of $(M_t,\omega)$ is given by
$f + \log\det(1 - \varphi(t)\barvarphi(t))$ up to an additive constant, more precisely,
\begin{equation}\label{p1}
\Ric(M_t,\omega) = \omega + \sqrt{-1}\partial_t\barpartial_t (f + \log\det(1-\varphi(t)\barvarphi(t)))
\end{equation}
where $\Ric(M_t,\omega)$ denotes the Ricci form with respect to the complex structure $J_t$ on $M_t$.
\end{theorem}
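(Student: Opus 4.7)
The plan is to compute $\Ric(M_t,\omega)$ directly in a local $J_t$-holomorphic coordinate chart furnished by the Newlander-Nirenberg theorem, and to compare the resulting expression with the Ricci potential identity $-\sqrt{-1}\partial\barpartial(\log\det g+f)=\omega$ on the original manifold $(M,J)$. Since $\Ric(M_t,\omega)-\omega$ is $\partial_t\barpartial_t$-exact by the $\partial_t\barpartial_t$-lemma (both sides represent $2\pi c_1(M_t)=2\pi c_1(M)$), it suffices to identify the potential up to an additive constant.

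For the local setup, fix a point and choose $J$-holomorphic coordinates $z^1,\ldots,z^m$; for $|t|$ small the Newlander-Nirenberg theorem provides $J_t$-holomorphic coordinates $w^1,\ldots,w^m$ in a neighborhood. Reading $\barpartial_t w^i=0$ in the $z$-chart amounts to the Beltrami-type equation $B=-A\varphi(t)$, with $A^i_k=\partial w^i/\partial z^k$ and $B^i_j=\partial w^i/\partial\bar z^j$. A Schur-complement computation then gives the real Jacobian of $(z,\bar z)\mapsto(w,\bar w)$ as $|\det A|^2\det(I-\varphi(t)\barvarphi(t))$, and equating the two expressions of the intrinsic volume form $\omega^m/m!$ yields
\[
\det(\tilde g)=\frac{\det(g)}{|\det A|^2\det(I-\varphi(t)\barvarphi(t))},
\]
where $\tilde g_{i\bar j}$ is the Hermitian matrix of $\omega$ in the $w$-coordinates. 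Applying $\Ric(M_t,\omega)=-\sqrt{-1}\partial_t\barpartial_t\log\det\tilde g$ in the $w$-chart and substituting, the proof reduces to establishing the local equality
\[
-\sqrt{-1}\partial_t\barpartial_t\bigl(\log\det g+f-\log|\det A|^2\bigr)=\omega.
\]
Granting this, the two $\log\det(I-\varphi(t)\barvarphi(t))$ contributions on the two sides of the target formula recombine to yield exactly the claimed identity.

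The main obstacle will be verifying the displayed local equality, which is the $J_t$-analog of the original Ricci potential equation with $\log|\det A|^2$ bridging the two complex structures. The approach uses Theorem \ref{ThmD}, which ensures that $\omega$ is of type $(1,1)$ with respect to both $J$ and $J_t$, together with the Beltrami relation $B=-A\varphi(t)$. A key observation is that the combination $\log\det g-\log|\det A|^2$ is invariant under holomorphic changes of the $z$-coordinates and changes only by a $J_t$-pluriharmonic term under $J_t$-holomorphic changes of the $w$-coordinates, so that $\partial_t\barpartial_t$ applied to it produces a well-defined global $(1,1)_t$-form on $M$. A direct computation, expanding $\partial_t$ and $\barpartial_t$ in terms of $\partial,\barpartial$ and the components of $\varphi(t)$, then establishes the identity, and substituting back into the expression for $\Ric(M_t,\omega)$ yields
\[
\Ric(M_t,\omega)=\omega+\sqrt{-1}\partial_t\barpartial_t\bigl(f+\log\det(I-\varphi(t)\barvarphi(t))\bigr),
\]
as claimed.
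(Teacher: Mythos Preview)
Your overall architecture coincides with the paper's: express $\det(g_{\alpha\barbet})$ in $J_t$-holomorphic coordinates, then reduce everything to the local identity
\[
-\sqrt{-1}\,\partial_t\barpartial_t\bigl(\log\det g + f - \log|\det A|^2\bigr)=\omega.
\]
Your derivation of $\det(g_{\alpha\barbet})$ via the intrinsic volume form $\omega^m$ and a Schur complement for the real Jacobian is a pleasant shortcut compared to the paper's route, which first computes $g_{\alpha\barbet}$ pointwise (Proposition~\ref{p6}) and then takes determinants. Both lead to the same formula $\det(g_{\alpha\barbet})=|\det A|^{-2}\det(I-\varphi\barvarphi)^{-1}\det g$.

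The gap is in the last step. You assert that the displayed identity follows ``by direct computation'' using only that $\omega$ is of type $(1,1)_t$ (i.e.\ $\varphi\lrcorner\,\omega=0$) and the Beltrami relation. That is not sufficient. When one differentiates $\log\det A$ and $\log\det\barA$ along $\barpartial_t$ and $\partial_t$ (the paper's equations \eqref{c1}--\eqref{c4}), the answer involves the bare divergence $\partial_i\varphi^i{}_{\barj}$, and the computation only closes up to $\omega$ once one uses the $f$-Kuranishi gauge built into the hypotheses of Theorem~\ref{ThmD}: from $\barpartial_f^\ast\varphi=0$ together with $\varphi\lrcorner\,\omega=0$ one obtains $\diver_f\varphi=0$, i.e.
\[
\partial_i\varphi^i{}_{\barj}=-\varphi^i{}_{\barj}\,\partial_i\log(e^f\det g),
\]
which is exactly the paper's equation \eqref{p11}. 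This identity is what converts the derivatives of $\log|\det A|^2$ into derivatives of $\log(e^f\det g)$ and produces the cancellation down to $R_{i\barj}-f_{i\barj}=g_{i\barj}$. Without invoking the gauge, extra divergence terms survive and the claimed equality fails; indeed, the specific form $f+\log\det(I-\varphi\barvarphi)$ of the Ricci potential is a feature of this particular gauge, not of an arbitrary $\varphi$ with $\varphi\lrcorner\,\omega=0$. You should name and use $\diver_f\varphi=0$ explicitly at this point.

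A minor remark: with the paper's conventions (see \eqref{coor}) the Beltrami relation is $B=A\varphi$, not $B=-A\varphi$; this sign does not affect your Jacobian formula since $\varphi$ enters only through $\varphi\barvarphi$.
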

\begin{remark}
In the case when $M_0$ is a K\"ahler-Einstein manifold, Theorem \ref{ThmE} has been obtained in \cite{CaoSunYauZhang2022},
Corollary 2.1, where this was used to give a necessary and sufficient condition for the existence
of K\"ahler-Einstein metrics on small deformations of a Fano K\"ahler-Einstein manifolds.
\end{remark}

After this introduction, this paper is organized as follows. In section 2, we review the proof of Kodaira vanishing theorem,
and see that the same formula as used for it can be used to prove Theorem \ref{ThmA}. Theorem \ref{ThmB} follows from Theorem \ref{ThmA}.
We also prove Lemma \ref{K1} which is used in the proof of Theorem \ref{ThmD} in section 3. 
In section 3, we review the deformation theory of Kodaira, Spencer and Kuranishi, and prove Theorem \ref{ThmD}.
We need to show that the K\"ahler form $\omega$ is invariant under the complex structure $J_t$ of $M_t$.
This is equivalent to the vanishing of the $(0,2)$-form $\varphi\lrcorner \omega$. Using Lemma \ref{K1} we can show
that $\varphi\lrcorner \omega$ is an eigenform with eigenvalue $\frac12$. Then the vanishing of $\varphi\lrcorner\omega$
follows from Theorem \ref{ThmA}. In section 4, we compute the Ricci potential $\omega$ on $M_t$, 
and prove Theorem \ref{ThmE}. 
The preliminary computations for the proof of Theorem \ref{ThmE} can be used for general K\"ahler manifolds.
In section 5 we give an alternate account of the moment map picture of Donaldson-Fujiki using the preliminary
computations in section 4. We also give an account of another moment map picture by Donaldson \cite{Donaldson17}.
In section 6, we treat the case of coupled K\"ahler metrics, and show Theorem \ref{ThmH}, Theorem \ref{ThmI}
and Corollary \ref{CorJ}. 

\section{Bochner-Kodaira formula and eigenvalue estimates for Hodge Laplacian}
We first recall the proof of Kodaira vanishing theorem along \cite{Kodaira53}, \cite{MorrowKodaira}. 
Let $(M,g)$ be a compact K\"ahler manifold, $(L,h)$ be an Hermitian line bundle over $M$.
We use the conventions of \cite{futaki88} for the curvatures of $(M,g)$. The curvature tensor is given 
for local holomorphic coordinates $z^1, \cdots, z^m$ by
$$ R_{i\barj k \barl} = \frac{\partial^2 g_{k\barl}}{\partial z^i \partial z^\barj} - 
g^{p\barq}\,\frac{\partial g_{k\barq}}{\partial z^i}\frac{\partial g_{p\barl}}{\partial z^\barj}$$
and the Ricci tensor is
$$ R_{i\barj} = R_{i\barj}{}^k{}_k = - g^{k\barl}\,R_{i\barj k\barl} 
= - \frac{\partial^2}{\partial z^i \partial z^\barj} \log\det (g_{k\barl}).$$
The Ricci identity, which is equivalent to the definition of the curvature tensor, is expressed 
for a holomorphic vector field $X$ as
$$ [\nabla_i,\nabla_\barj] X^k = R_{i\barj}{}^k{}_\ell X^\ell$$
and dually for a $(0,1)$-form $\eta$ as
$$ [\nabla_i,\nabla_\barj]\ \eta_\bark = -R_{i\barj}{}^\barl{}_\bark\ \eta_{\barl}.$$
The curvature $\psi$ of $h$ is expressed as
$$ \psi_{i\barj} = - \partial_i \partial_\barj \log h_U$$
where $h_U = h(s_U,\overline{s_U})$ for a local non-vanishing holomorphic section $s_U$ on an open set $U$.

Let $\eta \in \Gamma(\wedge^{p,q}(L))$ be an $L$-valued smooth $(p,q)$-form. We express $\eta$ locally on $U$ as
 $$\eta = \eta_{I\barj_1 \cdots \barj_q}\ s_U\otimes dz^I \wedge dz^\barj_1 \wedge \cdots \wedge dz^\barj_q$$
 where $I$ denotes the multi-index with $p$ holomorphic indices and 
 $\eta_{I\barj_1 \cdots \barj_q}$ is alternating with respect to the indices in $I$ as well as 
 the anti-holomorphic indices $\barj_1 \cdots \barj_q$. 
 Then using the symmetry of the Christoffel symbols we have
 \begin{equation}\label{Boch1}
 (\barpartial\eta)_{I\barj_0\barj_1\cdots\barj_q}
 =(-1)^p \sum_{\beta=0}^q (-1)^\beta \nabla _{\barj_\beta} \eta_{I\barj_0\barj_1\cdots \hat{\barj}_\beta \cdots \barj_q},
 \end{equation}
 \begin{eqnarray}\label{Boch2}
 (\barpartial^\ast_L \eta )_{I \barj_1 \cdots \barj_q}&=& - (-1)^p g^{i\barj} \nabla_i^L \eta_{I\barj \barj_1 \cdots \barj_q}\\
&=&  - (-1)^p h_U^{-1} g^{i\barj} \nabla_i(h_U\ \eta_{I\barj \barj_1 \cdots \barj_q})\nonumber
  \end{eqnarray}
Taking $j=j_0$ in \eqref{Boch1} and \eqref{Boch2} we obtain
\begin{eqnarray}
(\barpartial^\ast_L\barpartial\eta)_{I\barj_1\cdots\barj_q} &=& - g^{i\barj}\nabla_i^L\nabla_\barj\eta_{I\barj_1\cdots\barj_q} \label{Boch3}\\
 &&\  +\ \sum_{\beta=1}^q (-1)^{\beta+1} g^{i\barj}\nabla_i^L\nabla_{\barj_\beta}\eta_{I\barj\barj_1\cdots\hat{\barj}_\beta \cdots \barj_q}\nonumber
\end{eqnarray}
and
\begin{equation}\label{Boch4}
(\barpartial\barpartial^\ast_L\eta)_{I\barj_1\cdots\barj_q} = - \sum_{\beta=1}^q (-1)^{\beta + 1} g^{i\barj} \nabla_{\barj_\beta} \nabla_i^L \eta_{I\barj\barj_1\cdots\hat{\barj}_\beta \cdots \barj_q}
\end{equation}
Hence, for the $\barpartial$-Laplacian $\Delta_{\barpartial}^L = \barpartial^\ast_L\barpartial + \barpartial\barpartial^\ast_L$ we obtain
\begin{eqnarray}
(\Delta_\barpartial^L\,\eta)_{I\barj_1\cdots\barj_q} &=& - g^{i\barj}\nabla_i^L\nabla_\barj\eta_{I\barj_1\cdots\barj_q} \label{Boch5}\\
&&\  -\ \sum_{\beta=1}^q (-1)^{\beta} g^{i\barj}[\nabla_i^L,\nabla_{\barj_\beta}]\eta_{I\barj\barj_1\cdots\hat{\barj}_\beta \cdots \barj_q}. \nonumber
\end{eqnarray}

In the second term on the right hand side we apply the Ricci identity to the $I$-indices and obtain
\begin{eqnarray}\label{Boch5.5}
\sum_{\alpha=1}^p\ R^k{}_{i_\alpha \barj_\beta}{}^\barl\ \eta_{i_1 \cdots k \cdots i_p \barj_1 \cdots \barl \cdots \barj_q}
\end{eqnarray}
where $k$ appears at the $\alpha$-th place in $i_1 \cdots k \cdots i_p$ and $\barl$ appears in the $\beta$-th place in
$\barj_1 \cdots \barl \cdots \barj_q$. (Later we will treat only $(0,q)$-forms and $I$ will be empty so that this term will not be considered.)

Secondly, we apply the Ricci identity to $\barj\barj_1\cdots\hat{\barj}_\beta \cdots \barj_q$, and obtain
\begin{equation}\label{Boch6}
\sum_{\beta=1}^q (-1)^\beta g^{i\barj}\ R_{i\barj_\beta}{}^\bark{}_\barj\ \eta_{I\bark\barj_1\cdots\hat{\barj}_\beta\cdots \barj_q}
+ \sum_{\beta=1}^q (-1)^\beta g^{i\barj}\ \sum_{\gamma \ne \beta} R_{i\barj_\beta}{}^\bark{}_{\barj_\gamma}
\ \eta_{I\barj\barj_1\cdots\bark\cdots\hat{\barj}_\beta\cdots \barj_q}
\end{equation}
where $k$ appears in the $\gamma$-th place in $I\barj\barj_1\cdots\bark\cdots\hat{\barj}_\beta\cdots \barj_q$. 
The second term of \eqref{Boch6} vanishes because
$$ g^{i\barj}\ R_{i\barj_\beta}{}^\bark{}_{\barj_\gamma} = R^\barj{}_{\barj_\beta}{}^\bark{}_{\barj_\gamma}$$
is symmetric in $\barj$ and $\bark$, which can be checked using the Bianchi identity, while 
$\eta_{I\barj\barj_1\cdots\bark\cdots\hat{\barj}_\beta\cdots \barj_q}$ is alternating with respect to $\barj$ and $\bark$.
In the first term of \eqref{Boch6}, using the Bianchi identity we have
\begin{equation}\label{Boch7}
g^{i\barj}R_{i\barj_\beta}{}^\bark{}_\barj = - R^\bark{}_{\barj_\beta} = - g^{\ell\bark}R_{\ell\barj_\beta}
\end{equation}
where $R_{i\barj}$ denotes the $i\barj$-component of the Ricci curvature.
Hence, changing $\ell$ in the last term of \eqref{Boch7} to $i$, \eqref{Boch6} is equal to
\begin{equation}\label{Boch8}
\sum_{j=1}^q g^{i\bark}R_{i\barj_\beta}\eta_{I\barj_1\cdots\barj_{\beta-1}\bark\barj_{\beta+1}\cdots \barj_q}.
\end{equation}

Lastly we apply the Ricci identity to $L$-component and have the curvature term $\psi$ as
\begin{equation}\label{Boch9}
\sum_{\beta=1}^q g^{i\barj}\psi_{i\barj_\beta}\eta_{I\barj_1 \cdots \barj_{\beta-1}\barj\barj_{\beta+1}\cdots \barj_q}.
\end{equation}

It follows from \eqref{Boch5.5}, \eqref{Boch8} and \eqref{Boch9} that the $\barpartial$-Laplacian \eqref{Boch5} of $\eta$
is given by the following formula which we call the {\it Bochner-Kodaira formula}
\begin{eqnarray} \label{Boch10}
(\Delta_\barpartial^L\,\eta)_{I\barj_1\cdots\barj_q} &=& - g^{i\barj}\nabla_i^L\nabla_\barj\eta_{I\barj_1\cdots\barj_q}\\
&&\  + \sum_{\alpha=1}^p\ R^k{}_{i_\alpha \barj_\beta}{}^\barl\ \eta_{i_1 \cdots i_{\alpha-1} k i_{\alpha+1}
 \cdots i_p \barj_1 \cdots \barj_{\beta-1} \barl \barj_{\beta +1} \cdots \barj_q} \nonumber \\
&&\  + \sum_{\beta=1}^q g^{i\barj}(R_{i\barj_\beta} + \psi_{i\barj_\beta})\eta_{I\barj_1 \cdots \barj_{\beta-1}\barj\barj_{\beta+1}\cdots \barj_q}.
 \nonumber
\end{eqnarray}

Suppose that $K_M^{-1}\otimes L$ is ample so that there are a \ka metric $g$ and an Hermitian metric $h$ such that 
$(R_{i\barj} + \psi_{i\barj})$ is positive definite. Then if $\eta$ is a harmonic $(0,q)$-form (so that $I$ is empty and the second term of the right hand side of
\eqref{Boch10} vanish) then taking the $L^2$-inner product of $\eta$ and 
the both sides of \eqref{Boch10} we see that $\eta$ must vanish, i.e. 
$$H^q(M,  L) = 0 \quad\text{for}\quad q > 0. $$
This is the proof of Kodaira 
vanishing theorem. Next we apply \eqref{Boch10} in the following situation.

Let $M$ be a Fano manifold of dimension $m$, and regard the first Chern class $2\pi c_1(M)$ as the K\"ahler class.
Let $\omega$ be a K\"ahler form in $2\pi c_1(M)$ and express it as
$$ \omega = \sqrt{-1} \sum_{i,j=1}^m g_{i\barj}\ dz^i \wedge dz^{\barj}$$
where $z^1, \cdots, z^m$ are local holomorphic coordinates. Since the Ricci form
$$\Ric =\sqrt{-1} \sum_{i,j=1}^m R_{i\barj}\ dz^i \wedge dz^{\barj}$$
also represents $2\pi c_1(M)$ there exists a real smooth function $f$, called the Ricci potential, such that
\begin{eqnarray} \label{Boch10.5}
 \Ric - \omega = \sqrt{-1}\partial\barpartial f, \quad\text{i.e.}\quad R_{i\barj} - f_{i\barj} = g_{i\barj}.
\end{eqnarray}
Let $L=\mathcal O$ be the trivial line bundle endowed with the Hermitian metric $e^f$. We write $\Delta_f := \Delta_\barpartial^L$ for
our choice of the Hermitian metric $e^f$ on $L$. Note that this is the same as considering the weighted volume form $e^f \omega^m$ for
$(0,q)$-forms. Instead of $\eta \in A^{0,q}(L)$, we regard $\eta \in A^{0,q}(M)$, and then, by using \eqref{Boch10.5}, the Bochner-Kodaira formula \eqref{Boch10} reads
\begin{eqnarray} \label{Boch11}
(\Delta_f\,\eta)_{\barj_1\cdots\barj_q} &=& - g^{i\barj}\nabla_{i,f}\nabla_\barj\eta_{\barj_1\cdots\barj_q}\\
&&\  + \sum_{\beta=1}^q g^{i\barj}(R_{i\barj_\beta} - f_{i\barj_\beta})\eta_{\barj_1 \cdots \barj_{\beta-1}\barj\barj_{\beta+1}\cdots \barj_q} \nonumber\\
&=& - g^{i\barj}\nabla_{i,f}\nabla_\barj\eta_{\barj_1\cdots\barj_q} + q\ \eta_{\barj_1 \cdots \barj_q}
 \nonumber
\end{eqnarray}
where 
\begin{eqnarray} \label{Boch12}
\nabla_{i,f} = \nabla_i + f_i.
\end{eqnarray}
\begin{proof}[Proof of Theorem \ref{ThmA}] Let $(\cdot,\cdot)_f$ denote the weighted $L^2$-inner product with respect to the weighted volume form 
$e^f\omega^m$. The part (1) follows from
$$ \lambda (\eta, \eta)_f = (\Delta_f \eta, \eta)_f = (\nabla^{\prime\prime}\eta, \nabla^{\prime\prime}\eta)_f + q(\eta,\eta)_f.$$
If $\lambda = q$ then $\nabla^{\prime\prime}\eta = 0$. Since $\barpartial \eta$ is the skew-symmetrization of $\nabla^{\prime\prime}\eta$
it follows that $\barpartial \eta = 0$. Moreover, since $H^{0,q}_\barpartial(M) = 0$ for $q \ge 1$ on the Fano manifold $M$, $\eta$
is exact.
This completes the proof of Theorem \ref{ThmA}.
\end{proof}

\begin{proof}[Proof of Theorem \ref{ThmB}]
Since $\barpartial\Delta_f =\Delta_f\barpartial$ we have $\Delta_f \barpartial \eta = \lambda \barpartial \eta$. Apply (1) of 
Theorem \ref{ThmA} to $\barpartial \eta$ which is non-zero by our assumption. Then $\lambda \ge q+1$, which proves (1) 
of Theorem \ref{ThmB}.

If $\lambda = q+1$ then by (2) of Theorem \ref{ThmA} we have $\nabla^{\prime\prime}\barpartial \eta =0$. 
This implies that $(\barpartial \eta)^\sharp$ is holomorphic. This proves (2) of Theorem \ref{ThmB}.
\end{proof}

\begin{remark}
Using the arguments of the proof of Theorem \ref{ThmB} one can show the following. Let $M$ be a compact
K\"ahler manifold, and $\lambda_1^{(q)}$ be the first non zero eigenvalue of the Hodge Laplacian on $(0,q)$-forms.
Then we have 
$$\lambda_1^{(q)} \le \lambda_1^{(q-1)} \le \cdots \le \lambda_1^{(0)}.$$
This is even true for the real Hodge Laplacian for $q$-forms on a compact Riemannian manifold, see e.g. \cite{TakahashiJ}.
\end{remark}

Now we turn to a lemma which is proved with similar arguments as \eqref{Boch10} and will be used in the next section.
Let $M$ be a Fano manifold and the K\"ahler form $\omega$ and the Ricci form $\Ric$ satisfy \eqref{Boch10.5} as before.
Let $\varphi = \varphi^i{}_\bark\, \frac{\partial}{\partial z^i}\otimes dz^\bark$ be a smooth section of  $T^\prime M \otimes T^{\prime\prime\ast} M$, and write
\begin{eqnarray} 
\varphi\lrcorner\,\omega &:=& \varphi^i{}_\bark dz^\bark \wedge \sqrt{-1} g_{i\barj} dz^\barj + \varphi^i{}_\barj dz^\barj \wedge \sqrt{-1} g_{i\bark} dz^\bark \label{Boch14}\\
&=&  - \sqrt{-1}(\varphi_{\barj\bark} - \varphi_{\bark\barj}) dz^\barj \wedge dz^\bark \nonumber\\
&=&- \sqrt{-1}\ \psi_{\barj\,\bark}\, dz^\barj\wedge dz^\bark, \nonumber
\end{eqnarray}
where we have put 
\begin{equation*}
\psi_{\barj\,\bark} = \varphi_{\barj\,\bark} - \varphi_{\bark\,\barj}
\end{equation*}
which is of course the skew-symmetrization of $\varphi_{\barj\,\bark}$ and to which we apply the Bochner-Kodaira formula in the next section.
We denote by $\barpartial_f^\ast$ the formal adjoint of $\barpartial$ with respect to the weighted volume form $e^f \omega^m$,
i.e. for a (0,1)-form $\alpha$, 
$$\barpartial_f^\ast \alpha = - g^{i\barj}(\nabla_i + f_i) \alpha_\barj = - \nabla_f^\barj \alpha_\barj.$$
Here we have put $\nabla_f^\barj := g^{i\barj}(\nabla_i + f_i)$, and we will keep this notation below.
We also denote by $\diver_f (X)$ the divergence
$$ d\,i(X) (e^f \omega^m) = \diver_f (X)(e^f \omega^m) $$
for a type $(1,0)$-vector field $X$, i.e. $\diver_f(X) = (\nabla_i + f_i)X^i$.

\begin{lemma}\label{K1}
Suppose 
$\barpartial_f^\ast\varphi = 0$, i.e. $\nabla_f^\bark\varphi^i{}_\bark = 0$. Then
\begin{equation}\label{Boch16}
\barpartial\barpartial_f^\ast (\varphi\lrcorner\,\omega) = \sqrt{-1} \diver_f(\barpartial \varphi) + \frac12\,\varphi\lrcorner\,\omega.
\end{equation}
\end{lemma}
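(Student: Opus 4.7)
The plan is to compute $\barpartial\barpartial_f^\ast(\varphi\lrcorner\omega)$ directly in coordinates and match the output against the right-hand side: the divergence piece $\sqrt{-1}\diver_f(\barpartial\varphi)$ will fall out from commuting $\barpartial$-type derivatives with $\barpartial_f^\ast$, while the zeroth-order term $\varphi\lrcorner\omega$ will arise from the Ricci-identity commutator, reorganized via the Fano Ricci-potential identity $R_{i\barj}-f_{i\barj}=g_{i\barj}$. First I compute $\barpartial_f^\ast(\varphi\lrcorner\omega)$: applying the adjoint formula \eqref{Boch2} to $\varphi\lrcorner\omega = -\sqrt{-1}\psi_{\barj\bark}\,dz^\barj\wedge dz^\bark$ and expanding $\psi_{\barj\bark}=\varphi_{\barj\bark}-\varphi_{\bark\barj}$ gives
\[(\barpartial_f^\ast(\varphi\lrcorner\omega))_\bark = \sqrt{-1}\,g^{i\barj}(\nabla_i+f_i)(\varphi_{\barj\bark}-\varphi_{\bark\barj}).\]
The first contribution collapses to $\sqrt{-1}(\nabla_i+f_i)\varphi^i{}_\bark$ via $g^{i\barj}g_{p\barj}=\delta^i_p$. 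The second, after extracting $g_{p\bark}$, becomes $\sqrt{-1}\,g_{p\bark}\nabla_f^\barj\varphi^p{}_\barj$, which vanishes by the hypothesis $\nabla_f^\bark\varphi^i{}_\bark = 0$. Hence $\barpartial_f^\ast(\varphi\lrcorner\omega)$ is the $(0,1)$-form whose $\bark$-component is $\sqrt{-1}(\nabla_i+f_i)\varphi^i{}_\bark$.

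Next I apply $\barpartial$ to this $(0,1)$-form, so that $(\barpartial\barpartial_f^\ast(\varphi\lrcorner\omega))_{\barj\bark}$ is the $\barj\leftrightarrow\bark$ antisymmetrization of $\sqrt{-1}\nabla_\barj(\nabla_i+f_i)\varphi^i{}_\bark$. Commuting $\nabla_\barj$ past $\nabla_i+f_i$ produces three kinds of terms: (i) the divergence $(\nabla_i+f_i)\nabla_\barj\varphi^i{}_\bark$, (ii) the curvature commutator $-[\nabla_i,\nabla_\barj]\varphi^i{}_\bark$ (from $[\nabla_\barj,\nabla_i]=-[\nabla_i,\nabla_\barj]$), and (iii) a Hessian term $f_{i\barj}\varphi^i{}_\bark$ from $[\nabla_\barj,f_i]=f_{i\barj}$. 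Antisymmetrizing (i) immediately yields $\sqrt{-1}\diver_f(\barpartial\varphi)_{\barj\bark}$, the first term on the right-hand side of \eqref{Boch16}.

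For the remaining contributions I invoke the Ricci identity on the mixed tensor $\varphi^i{}_\bark$:
\[[\nabla_i,\nabla_\barj]\varphi^i{}_\bark = R_{i\barj}{}^i{}_p\,\varphi^p{}_\bark - R_{i\barj}{}^\barl{}_\bark\,\varphi^i{}_\barl.\]
The K\"ahler symmetry $R_{i\barj k\barl}=R_{k\barj i\barl}$ identifies the trace $R_{i\barj}{}^i{}_p$ with the Ricci tensor $R_{p\barj}$, while the symmetry $R_{i\barj k\barl}=R_{i\barl k\barj}$ (together with $R_{i\barj}{}^\barl{}_\bark = g^{p\barl}R_{i\barj p\bark}$ from $\nabla g=0$) makes the second term symmetric under $\barj\leftrightarrow\bark$, and hence it cancels upon antisymmetrization. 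Collecting what remains and including the Hessian terms, the surviving $(0,2)$-contribution is the $\barj\leftrightarrow\bark$ antisymmetrization of $-\sqrt{-1}(R_{i\barj}-f_{i\barj})\varphi^i{}_\bark$. The Fano Ricci-potential identity $R_{i\barj}-f_{i\barj}=g_{i\barj}$ converts this into $-\sqrt{-1}(g_{i\barj}\varphi^i{}_\bark-g_{i\bark}\varphi^i{}_\barj) = -\sqrt{-1}\psi_{\barj\bark} = (\varphi\lrcorner\omega)_{\barj\bark}$, giving the second term on the right-hand side of \eqref{Boch16}.

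The main obstacle is the bookkeeping in the curvature step, in particular verifying that the $(T''M)^\ast$-bundle curvature term $R_{i\barj}{}^\barl{}_\bark\varphi^i{}_\barl$ cancels under antisymmetrization in $\barj, \bark$. This requires converting the bundle curvature back to the Riemann tensor via a K\"ahler identity (which uses $\nabla g=0$) and then invoking the K\"ahler symmetry in the anti-holomorphic indices; without both the holomorphic and anti-holomorphic K\"ahler symmetries of the Riemann tensor, the Ricci-potential identity would not suffice to reassemble the zeroth-order $\varphi\lrcorner\omega$ piece.
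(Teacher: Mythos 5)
Your proposal is correct and follows essentially the same route as the paper's proof: you first use the gauge condition $\nabla_f^\bark\varphi^i{}_\bark=0$ to reduce $\barpartial_f^\ast(\varphi\lrcorner\,\omega)$ to the single term $\sqrt{-1}(\nabla_i+f_i)\varphi^i{}_{\bar\cdot}$, then apply $\barpartial$, use the Ricci identity once, discard the curvature term that is symmetric in the two anti-holomorphic indices, and combine the Ricci trace with the $f$-Hessian via $R_{i\barj}-f_{i\barj}=g_{i\barj}$ to recover $\varphi\lrcorner\,\omega$, exactly as in the paper (which carries out the same computation with the index lowered, i.e.\ with $\nabla_f^\bark\varphi_{\bark\barj}$ in place of $(\nabla_i+f_i)\varphi^i{}_\barj$).
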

\begin{proof}By the assumption $\barpartial_f^\ast\varphi = 0$ we have
\begin{eqnarray*}
\barpartial_f^\ast (\varphi\lrcorner\,\omega) &=& - \sqrt{-1} \nabla_f^\bark\,(\varphi_{\barj\,\bark} - \varphi_{\bark\,\barj}) dz^\barj\\
 &=& \sqrt{-1} \nabla_f^\bark\,\varphi_{\bark\,\barj} dz^\barj.
\end{eqnarray*}
It follows from the Ricci identity that
\begin{eqnarray}
\barpartial \barpartial_f^\ast (\varphi\lrcorner\,\omega)
 &=& \sqrt{-1} \nabla_\barl\nabla_f^\bark\,\varphi_{\bark\,\barj} dz^\barl \wedge dz^\barj \nonumber\\
 &=& \sqrt{-1} \left( \nabla^\bark\nabla_\barl\,\varphi_{\bark\,\barj} - R_\barl{}^{\bark \barp}{}_\bark\, \varphi_{\barp\barj}
 - R_\barl{}^{\bark\barp}{}_\barj\,\varphi_{\bark\,\barp}\right)\,dz^\barl \wedge dz^\barj\label{K2}\\
 & & + \sqrt{-1}\nabla_\barl( f^\bark\varphi_{\bark\,\barj}) dz^\barl \wedge dz^\barj.\nonumber
\end{eqnarray}
Note that $R_\barl{}^{\bark \barp}{}_\bark = R^\barp{}_\barl$.
Note also that $R_\barl{}^{\bark\barp}{}_\barj$ is symmetric in $\barl$ and $\barj$ while $ dz^\barl \wedge dz^\barj$ skew-Symmetric in 
$\barl$ and $\barj$, and thus the term 
$$ R_\barl{}^{\bark\barp}{}_\barj\,\varphi_{\bark\,\barp}\,dz^\barl \wedge dz^\barj$$
in \eqref{K2} vanishes. Hence we obtain
\begin{eqnarray*}
\barpartial \barpartial_f^\ast (\varphi\lrcorner\,\omega)
 &=& \sqrt{-1} (\nabla_i + f_i)\nabla_\barl\,\varphi^i{}_{\barj} dz^\barl \wedge dz^\barj
 - \sqrt{-1} (R^\bark{}_\barl - f^\bark{}_\barl )\, \varphi_{\bark\,\barj} dz^\barl \wedge dz^\barj\\
 &=& \sqrt{-1}\diver_f(\barpartial \varphi) + \frac12\, \varphi\lrcorner\,\omega
\end{eqnarray*}
where we have used $\varphi_{\barl\barj}=\frac12(\varphi_{\barl\barj} + \varphi_{\barj\barl})
+ \frac12(\varphi_{\barl\barj} - \varphi_{\barj\barl})$.
This completes the proof of Lemma \ref{K1}.
\end{proof}

\begin{remark}
Similar formulae for the K\"ahler-Einstein manifold with $c_1 < 0$ and $c_1=0$ as in \eqref{Boch16}
have been obtained in \cite{Sun12}, \cite{SunYau11}. 
Later in \cite{ZhangYY2014}, it has been shown
$$
\Delta_f (\varphi\lrcorner\,\omega) = \sqrt{-1} \diver_f(\barpartial \varphi) + \frac12\,\varphi\lrcorner\,(\Ric - \nabla\bar\nabla f)
$$
under the additional assumption $\barpartial(\varphi\lrcorner\omega)=0$.
It was later used in \cite{CaoSunYauZhang2022}.
The full proof given here
and the the following observation would help to get better understanding of the arguments in the next section.
That is, the Ricci identity is used twice for \eqref{Boch11} with $q=2$ against the skew-symmetric indices $j_1$ and $j_2$ while the Ricci identity used
just once for \eqref{Boch16} because of the assumption $\barpartial_f^\ast\varphi = 0$.
\end{remark}

\section{K\"ahler forms of Kuranishi family}

First we briefly recall the deformation theory of complex structures by Kodaira, Spencer, Nirenberg and Kuranishi \cite{KodairaSpencer58},
\cite{KodairaNirenbergSpencer58}, \cite{Kuranishi64}, see also the monographs \cite{MorrowKodaira}, \cite{Kodaira86}.
Let $\varpi : \mathfrak M \to B$ be a complex analytic family of compact complex manifolds where $B$ is an open set in $\bfC^n$
containing the origin $0$.
Take a small neighborhood $N$ 
of the origin $0$ in $B$ with the coordinates
$(t^1, \cdots, t^n)$, and suppose that 
$\varpi^{-1}(N)$ is covered by coordinate neighborhoods as
\begin{eqnarray*}
\varpi^{-1}(N) &=& \cup_{\alpha \in A}\  U_\alpha,\\
 U_\alpha &=& \{(z_\alpha,t)\ |\ z_\alpha = (z_\alpha^1, \cdots, z_\alpha^m),\ |z_\alpha^i| \le 1,\ t \in N \}
\end{eqnarray*}
Over $U_\alpha \cap U_\beta \ne \emptyset$, if $(z_\alpha, t) \in U_\alpha$ and $(z_\beta,t) \in U_\beta$
are the same points then they are related by
$$ z_\alpha^i = f_{\alpha\beta}^i(z_\beta,t), \quad i=1, \cdots, m$$
for some holomorphic function $f_{\alpha\beta}^i$. The change of complex structures as $t$ varies is considered 
as the change of $f_{\alpha\beta}(z_\beta, t)$ in $t$. We put
$$ \theta_{\alpha\beta}(t) = \sum_i \frac{\partial f_{\alpha\beta}^i(z_\beta,t)}{\partial t}\frac{\partial}{\partial z_\alpha^i}$$
where
$$ \frac{\partial}{\partial t} = \sum_{\nu=1}^n c_\nu \frac{\partial}{\partial t^\nu}$$
is a tangent vector of $N$ at the origin.
Then $\{\theta_{\alpha\beta}(t)\}$ is a $1$-cocycle of the tangent sheaf $\Theta_t = \mathcal O(T^\prime M_t)$ of $M_t :=\varpi^{-1}(t)$. A different choice
of coordinates gives a cohomologous $1$-cocyle, and the cohomology class
$[\theta_{\alpha\beta}(t)] \in H^1(M_t, \Theta_t)$ is called the infinitesimal 
deformation of $M_t$ and denoted by $\frac{\partial M_t}{\partial t}$. Further, by the Dolbeault isomorphism $H^1(M_t,\Theta_t)
\cong H^{0,1}_\barpartial(M_t,T^\prime M_t)$, $\{\theta_{\alpha\beta}\}$ corresponds to a $\barpartial_t$-closed $T^\prime M_t$-valued $(0,1)$-form $\eta$. This correspondence is
described using the proof of Dolbeault theorem as follows. By the map $H^1(M_t,\Theta_t) \to H^1(M_t, \mathcal A^0(T^\prime M_t)) = 0$, there is a $1$-cochain 
$\{\xi_\alpha\}$ of $\mathcal A^0(T^\prime M_t)$ such that $\xi_\beta - \xi_\alpha = \theta_{\alpha\beta}$. Then $\eta = \barpartial_t \xi_\alpha$, which is 
independent of $\alpha$.

On the other hand, all $M_t = \varpi^{-1}(t)$ are diffeomorphic, and the complex structures on $M_t$ can be considered on the same differentiable manifold $M$.
Considering at $t$, we have a decomposition
$$ T^\ast M \otimes \bfC = T^{\prime\ast} M_t \oplus T^{\prime\prime\ast}M_t \quad \text{with}\quad T^{\prime\prime\ast}M_t = \overline{T^{\prime^\ast}M_t}.$$
Considering at $t=0$, for $t$ small, $T^{\prime\ast}M_t$ and $T^{\prime\prime\ast}M_t$ are close to $T^{\prime\ast}M_0$ and $T^{\prime\prime\ast}M_0$. In particular, $T^{\prime\ast}M_t$ is expressed as a graph over $T^{\prime\ast}M_0$ in the form that for all  $\bfx_t \in T^{\prime\ast}M_t$
$$\bfx_t = \bfx_0 + \varphi(t)(\bfx_0)$$
where $\bfx_0 \in T^{\prime\ast}M_0$ and $\varphi(t)(\bfx_0) \in T^{\prime\prime\ast}M_0$.
Thus 
\begin{eqnarray}
{}&&\varphi(t) \in \operatorname{Hom}(T^{\prime\ast}M_0,T^{\prime\prime\ast}M_0) = T^{\prime}M_0 \otimes T^{\prime\prime\ast}M_0, \nonumber\\
&& \quad \varphi(t) = \varphi^i{}_\barj (t) \frac{\partial}{\partial z^i} \otimes dz^\barj. \label{D1}
\end{eqnarray}
Here $z^i = z_0^i$ are local holomorphic coordinates of $M_0$, and we keep this notation below.
Thus $T^{\prime\ast}M_t$ is spanned by
\begin{eqnarray}\label{D2}
dz^i + \varphi^i{}_\barj(t) dz^\barj =: e^i, \qquad i = 1, \cdots, m,
\end{eqnarray}
or equivalently, $T^{\prime\prime}M_t$ is spanned by 
\begin{eqnarray}\label{D2.1}
\frac{\partial}{\partial z^\barj} - \varphi^i{}_\barj(t) \frac{\partial}{\partial z^i} =: T_\barj, \qquad i = 1, \cdots, m.
\end{eqnarray}
Then it can be shown as in \cite{MorrowKodaira}, page 152, Theorem 1.1, that
\begin{enumerate}
\item[(1)] $\varphi(0) = 0$;

\vspace{1.5mm}

\item[(2)] $\barpartial\varphi(t) - \frac12 [\varphi(t),\varphi(t)] = 0$;

\vspace{1.5mm}

\item[(3)] $\eta = - \frac{\partial \varphi(t)}{\partial t}|_{t=0}$.
\end{enumerate}
Here, for $T^\prime$-valued forms $\varphi$ and $\psi$, $[\varphi,\psi]$ is defined by the bracket with respect to $T^\prime$-components and
the wedge product with respect to the differential forms. Note that (2) is equivalent to the integrability condition of Newlander-Nirenberg.
Given $\barpartial$-closed $T^\prime$-valued $1$-form $\eta$ we wish to construct $\varphi(t)$ satisfying (1), (2) and (3).
We may choose an Hermitian metric on $M_0$ and take $\eta$ to be harmonic. Thus
$$ \barpartial \eta = 0 \quad \text{and} \quad \barpartial^\ast \eta = 0. $$
It is shown (c.f. \cite{MorrowKodaira}, Chapter 4, section 2) that  when $H^2(M,\Theta) = 0$ the power series expansion
$$ \varphi(t) = \sum_{|I|=1} t^I\varphi_I + \sum_{|I|\geq 2} t^I\varphi_I$$
has a unique solution under
\begin{enumerate}
\item[(4)] $\barpartial\varphi(t) = \frac12 [\varphi(t),\varphi(t)]$;

\vspace{1.5mm}

\item[(5)] $\barpartial^\ast \varphi(t) = 0$;

\vspace{1.5mm}

\item[(6)] For $|I| = 1$, $\varphi_I$ is harmonic.
\end{enumerate}
But we apply this deformation theory
for Fano manifolds, and 
$$H^2(M_0,\Theta) \cong H^{m-2}(M_0, \Omega^1(K_{M_0})) =  0$$
by Serre duality and Kodaira-Nakano vanishing. Note that the 
condition (5) is equivalent to requiring
$$ \varphi(t) = \sum_{|I|=1} t^I\varphi_I + \frac12 \barpartial^\ast G [\varphi(t), \varphi(t)]. $$
Following \cite{Sun12}, \cite{CaoSunYauZhang2022}, we call the condition (5) the {\it Kuranishi gauge}.

Now let $M$ be a Fano manifold with a K\"ahler form $\omega$ in $2\pi c_1(M)$ with the Ricci potential $f$ as in \eqref{Boch10.5}.
In this case, 
one can argue as in \cite{ZhangYY2014}, Proposition 6, using a diffeomorphism, or 
instead of employing the Hodge theory using an Hermitian metric of the complex manifold $M_0$, we can employ the Hodge theory 
using the K\"ahler metric $\omega$ with weighted volume form $e^f \omega^m$, or equivalently using the K\"ahler metric $\omega$
and the bundle metric $e^f$ of the trivial line bundle $\mathcal O$ to apply the same arguments as in \cite{MorrowKodaira}, Chapter 4, section 2, and we
obtain the Kuranishi family described by
\begin{enumerate}
\item[(4)] $\barpartial\varphi(t) = \frac12 [\varphi(t),\varphi(t)]$.

\vspace{1.5mm}

\item[(7)] $\barpartial_f^\ast \varphi(t) = 0$.

\vspace{1.5mm}

\item[(8)] For $|I| = 1$, $\varphi_I$ is $\Delta_f$-harmonic.
\end{enumerate}
We call the condition (8) the {\it $f$-Kuranishi gauge}. 
This Kuranishi family was also considered in \cite{CaoSunZhang2022}. Then we can argue as in the proof of Theorem 2.1 in \cite{Sun12} to obtain the following lemma.
\begin{lemma}\label{D3}
For the Kuranishi family satisifying (4), (7), (8), we have 
$ \Delta_f (\varphi\lrcorner\,\omega) =  \frac12\, \varphi\lrcorner\,\omega$.
Combining this with Theorem \ref{ThmA}, we obtain $\varphi\lrcorner\,\omega =  0$.
\end{lemma}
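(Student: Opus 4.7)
The plan is to prove the eigenvalue identity $\Delta_f(\varphi\lrcorner\omega)=\varphi\lrcorner\omega$, i.e.\ that $\varphi\lrcorner\omega$ is an eigenform of $\Delta_f$ with eigenvalue $1$. Since $\varphi\lrcorner\omega$ is a $(0,2)$-form, Theorem \ref{ThmA}(1) applied with $q=2$ says that any nonzero eigenform must satisfy $\lambda\ge 2$; the eigenvalue $1<2$ therefore forces $\varphi\lrcorner\omega\equiv 0$.

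To prove the identity I split $\Delta_f=\barpartial_f^\ast\barpartial+\barpartial\barpartial_f^\ast$. The second summand is delivered by Lemma \ref{K1}, whose hypothesis $\barpartial_f^\ast\varphi=0$ is precisely the $f$-Kuranishi gauge (7):
\[
\barpartial\barpartial_f^\ast(\varphi\lrcorner\omega)=\sqrt{-1}\,\diver_f(\barpartial\varphi)+\varphi\lrcorner\omega.
\]
For the first summand, $\barpartial\omega=0$ and the Leibniz rule for the contraction give $\barpartial(\varphi\lrcorner\omega)=(\barpartial\varphi)\lrcorner\omega$, and integrability (4) rewrites this as $\tfrac12[\varphi,\varphi]\lrcorner\omega$. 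The crux of the proof is then the companion identity
\[
\barpartial_f^\ast\bigl((\barpartial\varphi)\lrcorner\omega\bigr)=-\sqrt{-1}\,\diver_f(\barpartial\varphi),
\]
after which summing the two pieces causes the divergence terms to cancel and leaves exactly $\varphi\lrcorner\omega$.

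I would prove the crux identity by a Bochner-Kodaira-type component computation one degree higher than Lemma \ref{K1}: expand $(\barpartial\varphi)\lrcorner\omega$ as the $(0,3)$-form with antisymmetrized components built from $g_{i\barl}(\barpartial\varphi)^i{}_{\barj_1\barj_2}$, apply $\barpartial_f^\ast$, and commute the resulting covariant derivatives using the Ricci identity. The curvature terms that pair a symmetric curvature contraction against antisymmetric antiholomorphic indices drop out by the Bianchi-type argument used after \eqref{Boch6}, while the surviving Ricci contribution combines with the weight term $-f_{i\barj}$ through the Ricci-potential identity $R_{i\barj}-f_{i\barj}=g_{i\barj}$ from \eqref{Boch10.5} to absorb the spurious $\omega$-type contribution and leave only the pure divergence on the right-hand side.

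The main obstacle is the bookkeeping in this crux calculation. In Lemma \ref{K1} the Ricci identity was applied only once because the gauge $\barpartial_f^\ast\varphi=0$ eliminated one derivative; here $(\barpartial\varphi)\lrcorner\omega$ carries two antisymmetric antiholomorphic indices inherited from $\barpartial\varphi$, so the Ricci identity must be applied twice, once against each of those indices. Verifying that every resulting curvature term either vanishes by antisymmetry/Bianchi or is exactly the one required to cancel the $\varphi\lrcorner\omega$ contribution from Lemma \ref{K1}, with no residual term surviving, is the heart of the argument, and it is here that the specific quadratic structure $\barpartial\varphi=\tfrac12[\varphi,\varphi]$ provided by (4) enters.
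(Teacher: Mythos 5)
Your overall strategy --- establish $\Delta_f(\varphi\lrcorner\,\omega)=\varphi\lrcorner\,\omega$ and invoke Theorem \ref{ThmA} with $q=2$ to force vanishing --- is the paper's strategy, and your use of Lemma \ref{K1} for the $\barpartial\barpartial_f^\ast$ summand is correct. The gap is the ``crux identity'' $\barpartial_f^\ast\barpartial(\varphi\lrcorner\,\omega)=-\sqrt{-1}\,\diver_f(\barpartial\varphi)$. This is not a formal identity that a Ricci-identity computation can deliver: writing $\eta=\varphi\lrcorner\,\omega$ and combining your identity with Lemma \ref{K1} and the Bochner-Kodaira formula \eqref{Boch11} for $q=2$ (which gives $\Delta_f\eta=-g^{i\barj}\nabla_{i,f}\nabla_\barj\eta+2\eta$) yields $-g^{i\barj}\nabla_{i,f}\nabla_\barj\eta=-\eta$, i.e.\ the nonnegative operator $\nabla^{\prime\prime\ast_f}\nabla^{\prime\prime}$ would have $\eta$ as an eigenform with eigenvalue $-1$, which already forces $\eta=0$. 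So your crux identity is equivalent to the conclusion of the lemma; it holds only because both sides are ultimately zero, and no bookkeeping with the Ricci and Bianchi identities alone can establish it, since the analogous statement for a general $\varphi$ with $\barpartial_f^\ast\varphi=0$ is false.

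The missing idea is induction on the order of the power series $\varphi(t)=\sum_I t^I\varphi_I$. The paper does not arrange a cancellation between the two summands of $\Delta_f$; it shows that both extra terms vanish separately at each order. For $|I|=1$, condition (8) gives $\barpartial\varphi_I=0$, hence $\barpartial(\varphi_I\lrcorner\,\omega)=0$ and $\diver_f(\barpartial\varphi_I)=0$, so Lemma \ref{K1} alone gives $\Delta_f(\varphi_I\lrcorner\,\omega)=\varphi_I\lrcorner\,\omega$; Theorem \ref{ThmA} then kills $\varphi_I\lrcorner\,\omega$, and together with (7) this yields $\diver_f\varphi_I=0$. For $|I|=k$ one uses the recursion $\barpartial\varphi_I=\tfrac12\sum_{J+K=I}[\varphi_J,\varphi_K]$ together with the identities $[\varphi_J,\varphi_K]\lrcorner\,\omega=2\varphi_J\lrcorner\,\partial(\varphi_K\lrcorner\,\omega)$ and $\diver_f[\varphi_J,\varphi_K]=2\varphi_J\lrcorner\,\partial(\diver_f\varphi_K)$, both of which vanish by the inductive hypotheses $\varphi_K\lrcorner\,\omega=0$ and $\diver_f\varphi_K=0$ for $|K|<k$. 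This is where the quadratic structure of (4) actually enters --- not in a pointwise curvature computation, but in expressing the obstruction at order $k$ through the already-vanished data of orders below $k$.
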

\begin{proof}
We prove this by induction. First we consider the case $|I|=1$. By the condition (8), we know $\barpartial \varphi_I= 0$ and
$\barpartial_f^\ast \varphi_I= 0$. Then $\barpartial \varphi_I = 0$ implies $\barpartial (\varphi_I \lrcorner\,\omega) = 0$. It then follows from
Lemma \ref{K1} that 
$ \Delta_f (\varphi_I\lrcorner\,\omega )=   \frac12\,\varphi_I\lrcorner\,\omega$. 
Combining this with Theorem \ref{ThmA}, we obtain $\varphi_I \lrcorner\,\omega =  0$.
This completes the proof of the case $|I| = 1$. Note further that
$\varphi_I \lrcorner\,\omega =  0$ implies $\varphi_{I\bari\barj}$ is symmetric in $\bari$ and $\barj$. Thus $\barpartial_f^\ast \varphi_I= 0$ implies
$\diver_f \varphi_I = 0$.

Next we consider the case $|I| = k$ assuming $\varphi_J \lrcorner\,\omega =  0$ (and thus
$\diver_f \varphi_J = 0$) for $|J| \le k-1$. By Lemma \ref{K1} and (4) we have
\begin{eqnarray*}
 \bp (\varphi_I\lrcorner\, \omega) = (\bp\varphi_I)\lrcorner\, \omega 
= \frac12 \sum_{J+K=I}[\varphi_J,\varphi_K]\lrcorner\,\omega
= \sum_{J+K=I}\varphi_J\lrcorner\partial(\varphi_K\lrcorner\,\omega)
= 0
\end{eqnarray*}
since $\varphi_K\lrcorner\,\omega=0$ for $|K| < k$.
It follows from Lemma \ref{K1}, (7) and the above equality that 
\begin{eqnarray*}
\Delta_f (\varphi_I\lrcorner\,\omega ) &=& \sqrt{-1}\diver_f(\barpartial \varphi_I) + \frac12\,\varphi_I\lrcorner\,\omega\\
&=& \frac{\sqrt{-1}}2 \diver_f(\sum_{J+K=I}[\varphi_J,\varphi_K]) + \frac12\,\varphi_I\lrcorner\,\omega\\
&=& \sqrt{-1} \sum_{J+K=I}\varphi_J \lrcorner \partial (\diver_f\,\varphi_K) + \frac12\,\varphi_I\lrcorner\,\omega\\
&=& \frac12\,\varphi_I\lrcorner\,\omega.
\end{eqnarray*}
Combining this with Theorem \ref{ThmA}, we obtain $\varphi_I \lrcorner\,\omega =  0$.
This completes the proof of Lemma \ref{D3}.
\end{proof}

\begin{proof}[Proof of Theorem \ref{ThmD}]
Let $J_t$ be the complex structure of $M_t$. It is sufficient to show that $\omega$ is $J_t$-invariant. 
But this is equivalent to showing that $\omega$ vanishes on $T^\prime M_t$ or equivalently that 
$\omega$ vanishes on $T^{\prime\prime} M_t$. Since $\omega$ is a K\"ahler form for $J_0$ by the assumption and thus
vanishes on $T^\prime M_0$ and $T^{\prime\prime}M_0$, 
the description \eqref{D2.1} of $T^{\prime\prime}M_t$ shows that $\omega$ is $J_t$-invariant if and only if 
$\varphi(t)\lrcorner\,\omega = 0$ (c.f. the proof of Lemma 2.1 in \cite{futaki06} for some more detail). 
But this is indeed the case by Lemma \ref{D3}.
This completes the proof of Theorem \ref{ThmD}.
\end{proof}

\section{Ricci potential of the Kuranishi family}
In the previous section we showed that the K\"ahler form $\omega$ in $2\pi c_1(M_0)$ of a 
Fano manifold $M_0$ remains to be a K\"ahler form
on any member $M_t$ of the Kuranishi family of the deformations of the complex structure of $M_0$.
In this section we compute the Ricci potential of $(M_t, \omega)$. The result is Theorem \ref{ThmE} in the
Introduction.

To prove Theorem \ref{ThmE} we need preliminary calculations. In this paragraph (until the beginning of the proof of Theorem \ref{ThmE}),
we consider general deformations, not restricting to Fano manifolds, as we wish to re-use the equations in this 
paragraph in the next section. But in Proposition \ref{p6}, we assume that the K\"ahler form $\omega$ remains to be a K\"ahler form
for the deformations $M_t$ (or equivalently that $\omega$ is $J_t$-invariant for the complex structures $J_t$ of $M_t$), which we proved for a Fano manifold $M_0$ in Theorem \ref{ThmD} and we also assume in the next section
since it is a basic assumption in Donaldson-Fujiki picture.
Recall from \eqref{D2} and \eqref{D2.1}
that 
$T^{\prime\ast}M_t$ and $T^{\prime\prime}M_t$ are respectively spanned by
$$e^i = 
dz^i + \varphi^i{}_\barj(t) dz^\barj 
\quad \text{and} \quad 
T_\barj = \frac{\partial}{\partial z^\barj} - \varphi^i{}_\barj(t) \frac{\partial}{\partial z^i},\quad  
 i,\, j = 1, \cdots, m,$$ 
where $z^1, \cdots, z^m$ are local holomorphic coordinates for $M_0$.
We also use the notations $e_\bari := \overline{e_i}$ and $T_i := \overline{T_\bari}$. 
Let $w^1, \cdots, w^m$ be local holomorphic coordinates for $M_t$ on the same coordinate neighborhood as $z^i$'s.
Note that these two coordinates are related by
\begin{equation}\label{coor}
\frac{\partial w^\beta}{\partial z^\barj} = \varphi^i{}_\barj\,\frac{\partial w^\beta}{\partial z^i} \quad \text{and} \quad
\frac{\partial z^i}{\partial w^\barbet} = - \varphi^i{}_\barj\,\frac{\partial z^\barj}{\partial w^\barbet}
\end{equation}
from which we further obtain
\begin{equation}\label{p3}
dw^\alpha = \frac{\partial w^\alpha}{\partial z^i}\, e^i
 \quad \text{and} \quad
\frac{\partial}{\partial w^\beta} = \frac{\partial z^j}{\partial w^\beta}\,T_j.
\end{equation}
We put $\varphi := \left(\varphi^i{}_\barj\right)$ and $\barvarphi := \left(\overline{\varphi^i{}_\barj}\right)$. By computing
$\frac{\partial w^\alpha}{\partial w^\beta} = \delta^\alpha{}_\beta$ in terms of $z$ coordinates using \eqref{coor}, we obtain
\begin{equation}\label{p4}
\frac{\partial w^\alpha}{\partial z^i}\, (\delta^i{}_j - (\varphi\barvarphi)^i{}_j)\, \frac{\partial z^j}{\partial w^\beta} = \delta^\alpha{}_\beta.
\end{equation}
We put 
$$ A = \left(a^\alpha{}_i\right), \quad a^\alpha{}_i = \frac{\partial w^\alpha}{\partial z^i}, \quad A^{-1} = \left(b^i{}_\alpha\right) $$
so that $b^i{}_\alpha a^\alpha{}_j = \delta^i{}_j$. Then \eqref{p4} reads for sufficiently small $t$
\begin{equation}\label{p5}
A\,(I - \varphi\barvarphi)\,\frac{\partial z}{\partial w} = I, \quad \frac{\partial z^i}{\partial w^\alpha} = ((I - \varphi\barvarphi)^{-1})^i{}_j\,b^j{}_\alpha.
\end{equation}
Using \eqref{coor} we also have
\begin{equation}\label{p5.1}
\frac{\partial z^i}{\partial w^\barbet} = - \varphi^i{}_\barj\, \overline{((I - \varphi\barvarphi)^{-1})^j{}_\ell}\, \overline{b^\ell{}_\beta}.
\end{equation}
Expressing $\frac{\partial}{\partial w^\alpha}$ in terms of $z$ coordinates using \eqref{p3} and \eqref{p5} we also obtain
\begin{equation}\label{localframe}
\frac{\partial}{\partial w^\alpha} = ((I -\varphi\barvarphi)^{-1})^i{}_j\,b^j{}_\alpha\, T_i.
\end{equation}
Let $g_{\alpha\barbet}$ denote the K\"ahler metric of $(M_t,\omega)$ with respect to the local holomorphic coordinates $w^1, \cdots, w^m$,
namely,
$$ \omega = \sqrt{-1} g_{\alpha\barbet}\ dw^\alpha \wedge dw^\barbet.$$
We also retain the notation $g_{i\barj}$ to express the K\"ahler metric of $(M_0,\omega)$ with respect to the local holomorphic coordinates
$z^1, \cdots, z^m$ for $M_0$. We also use the notation $g_t = (g_{\alpha\barbet})$ and $g_0 = (g_{i\barj})$. 
\begin{proposition}\label{p6} If $\omega$ is a K\"ahler form for $M_t$, that is $\varphi\lrcorner\,\omega = 0$ then $g_t$ and $g_0$ are related by 
\begin{equation}\label{p7}
g_{\alpha\barbet}=\left( (I-\varphi\barvarphi\right)^{-1})^i{}_k \, b^k{}_{\ga}\,\overline{b^j{}_\beta}\, g_{i\bar j}. 
\end{equation}
\end{proposition}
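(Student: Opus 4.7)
The plan is to compute $g_{\alpha\barbet}$ directly from
$\sqrt{-1}\,g_{\alpha\barbet} = \omega\bigl(\partial/\partial w^\alpha,\,\partial/\partial w^\barbet\bigr)$,
which is available because Theorem \ref{ThmD} ensures that $\omega$ is of type $(1,1)$ with respect to $J_t$. The frame formula \eqref{localframe} and its complex conjugate expand $\partial/\partial w^\alpha$ and $\partial/\partial w^\barbet$ as linear combinations of the $T_i$ and $T_\bari$, so the central task is to evaluate $\omega(T_i,T_\barj)$ in the $z$-coordinates of $M_0$.

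Substituting $T_i = \partial/\partial z^i - \overline{\varphi^r{}_\bari}\,\partial/\partial z^\barr$ and the analogous $T_\barj$ into $\omega = \sqrt{-1}\,g_{p\barq}\,dz^p\wedge dz^\barq$ gives
$\omega(T_i,T_\barj) = \sqrt{-1}\bigl(g_{i\barj} - g_{p\barq}\varphi^p{}_\barj\,\overline{\varphi^q{}_\bari}\bigr)$.
The K\"ahler hypothesis $\varphi\lrcorner\omega = 0$, equivalent by \eqref{Boch14} to the symmetry $g_{p\bari}\varphi^p{}_\barj = g_{p\barj}\varphi^p{}_\bari$, then simplifies the correction: taking the complex conjugate of the symmetry and using the Hermiticity of $g_{i\barj}$ produces the dual identity $g_{p\barq}\,\overline{\varphi^q{}_\bari} = g_{i\barq}\,\overline{\varphi^q{}_\barp}$, which rewrites $g_{p\barq}\varphi^p{}_\barj\,\overline{\varphi^q{}_\bari}$ as $g_{i\barq}(\barvarphi\varphi)^\barq{}_\barj$ and yields
$\omega(T_i,T_\barj) = \sqrt{-1}\,g_{i\barq}(I - \barvarphi\varphi)^\barq{}_\barj$.

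Substituting this together with \eqref{localframe} and its conjugate into the pairing, the identity $\overline{(I-\varphi\barvarphi)^{-1}} = (I-\barvarphi\varphi)^{-1}$ collapses the factor $\overline{((I-\varphi\barvarphi)^{-1})^k{}_l}\,(I-\barvarphi\varphi)^\barq{}_\bark$ to $\delta^\barq{}_\barl$, after which the remaining expression is precisely the right-hand side of \eqref{p7}. The main delicacy I anticipate is bookkeeping in the use of the symmetry: $\varphi\lrcorner\omega = 0$ admits two natural presentations, and applying it instead as $g_{p\barq}\varphi^p{}_\barj\,\overline{\varphi^q{}_\bari} = g_{p\barj}(\varphi\barvarphi)^p{}_i$ leaves the correction in a form that cancels against the holomorphic factor $C^i{}_j$ in \eqref{localframe} but leads to an equivalent, differently indexed expression which reduces to \eqref{p7} only after the further identity $C^T g = g\,\overline{C}$ (with $C=(I-\varphi\barvarphi)^{-1}$), itself a consequence of $\varphi\lrcorner\omega=0$. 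The conjugate-first route above is specifically designed to line up with the conjugate factor in \eqref{localframe} and bypass this detour.
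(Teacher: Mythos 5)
Your proposal is correct and follows essentially the same route as the paper's own proof: both evaluate $\omega(T_i,T_\barj)$ in the $z$-coordinates, use the symmetry $\varphi_{\bari\barj}=\varphi_{\barj\bari}$ coming from $\varphi\lrcorner\,\omega=0$ to rewrite the correction term as $g_{i\barl}(\barvarphi\varphi)^\barl{}_\barj$, and then contract with \eqref{localframe} so that the factor $(I-\barvarphi\varphi)$ cancels against $(I-\barvarphi\varphi)^{-1}$ from the conjugate frame. The only differences are an overall sign convention in $\omega(T_i,T_\barj)$ and your explicit discussion of which presentation of the symmetry to use, neither of which changes the argument.
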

\begin{proof}
Using the $z$ coordinates one compute
$$ -\sqrt{-1} \omega (T_i,T_\barj) = g_{i\barj} - \overline{\varphi^p{}_\bari}\,\varphi^q{}_\barj\,g_{\barp q}.$$
Using the assumption $\varphi \lrcorner\,\omega = 0$, i.e. $\varphi_{\bari\barj} = \varphi_{\barj\bari}$, we have
$$ \overline{\varphi^p{}_\bari}\,\varphi^q{}_\barj\,g_{\barp q} = \overline{\varphi_{\barq\bari}}\,\varphi^q{}_{\barj} = 
\overline{\varphi_{\bari\barq}}\,\varphi^q{}_{\barj} = g_{i\barl}\, \overline{\varphi^\ell{}_{\barq}}\,\varphi^q{}_{\barj}, $$
from which it follows that
\begin{equation}\label{p8}
-\sqrt{-1} \omega (T_i,T_\barj) = g_{i\barl}\,(I - \barvarphi\varphi)^\barl{}_\barj.
\end{equation}
Using \eqref{localframe} and \eqref{p8} we obtain
\begin{eqnarray*}
g_{\alpha\barbet} &=& ((I - \varphi\barvarphi)^{-1})^i{}_j\, b^j{}_\alpha\, ((I - \barvarphi\varphi)^{-1})^\bark{}_\barp\, \overline{b^p{}_\beta}\, 
g_{i\barl}\,(I - \barvarphi\varphi)^\barl{}_\bark\\
&=& ((I - \varphi\barvarphi)^{-1})^i{}_j\, b^j{}_\alpha\, \overline{b^\ell{}_\beta}\, g_{i\barl}.
\end{eqnarray*}
This is equal to \eqref{p7}. This completes the proof of Proposition \ref{p6}.
\end{proof}
\noindent
The following computations will be useful later. First, using \eqref{p3} we have
\begin{eqnarray}\label{c1}
\frac{\partial}{\partial w^\barbet} \log \det A &=& \frac{\partial z^\barj}{\partial w^\barbet}\, T_\barj \log \det A 
= \frac{\partial z^\barj}{\partial w^\barbet}\, b^i{}_\gamma\, T_\barj\, a^\gamma{}_i\\
&=& \frac{\partial z^\barj}{\partial w^\barbet} b^i{}_\gamma (\partial_\barj a^\gamma{}_i - \varphi^k{}_\barj \partial _k a^\gamma{}_i)\nonumber\\
&=& \frac{\partial z^\barj}{\partial w^\barbet} b^i{}_\gamma (\partial_i (\varphi^\ell{}_\barj a^\gamma{}_\ell) - \varphi^k{}_\barj \partial _k a^\gamma{}_i)\nonumber\\
&=& \frac{\partial z^\barj}{\partial w^\barbet} \partial_i\varphi^i{}_\barj,\nonumber
\end{eqnarray}
\noindent
and using \eqref{p3} and \eqref{p5} we have
\begin{eqnarray}\label{c2}
\frac{\partial}{\partial w^\barbet} \log \det \barA &=& \frac{\partial z^\barj}{\partial w^\barbet} \,T_\barj \log \det \barA 
= \frac{\partial z^\barj}{\partial w^\barbet}\, \overline{b^i{}_\gamma}\, T_\barj \,\overline{a^\gamma{}_i}\\
&=& \frac{\partial z^\barj}{\partial w^\barbet}\, \overline{b^i{}_\gamma}\, 
(\partial_\barj\, \overline{a^\gamma{}_i} - \varphi^k{}_\barj\, \partial _k\, \overline{a^\gamma{}_i})\nonumber\\
&=& \frac{\partial z^\barj}{\partial w^\barbet}\, \overline{b^i{}_\gamma}\, 
(\partial_\barj\, \overline{a^\gamma{}_i} - \varphi^k{}_\barj\, \overline{\partial_i(\varphi^\ell{}_\bark\,a^\gamma{}_\ell})\nonumber \\
&=& \overline{b^j{}_\beta}\, \overline{b^i{}_\gamma}\, \partial_\barj\overline{a^\gamma{}_i} - 
\frac{\partial z^\barj}{\partial w^\barbet} \varphi^k{}_\barj\,\overline{\partial_i\varphi^i{}_\bark}\,.\nonumber
\end{eqnarray}
Similarly, we can show
\begin{equation}\label{c3}
T_i\,\overline{b^j{}_\beta} = - \overline{b^k{}_\beta}\, \overline{\partial_k\varphi^j{}_\bari}
\end{equation}
and
\begin{equation}\label{c4}
T_i\,\overline{b^\ell{}_\beta\, b^j{}_\gamma\, \partial_\ell a^\gamma{}_j} = - \overline{b^\ell{}_\beta\, \partial_\ell \partial_j\, \varphi^j{}_\bari}\,.
\end{equation}

Now we are ready to prove Theorem \ref{ThmE}.
\begin{proof}[Proof of Theorem \ref{ThmE}]
By Proposition \ref{p6}
\begin{eqnarray*}
 \Ric(M_t,\omega) &=& -\partial_t\barpartial_t \log \det ((A\barA)^{-1} (I - \varphi\barvarphi))^{-1}\,g_0)\\
 &=& -\partial_t\barpartial_t \log ((\det ((A\barA)^{-1}\, g_0))e^f) + \partial_t\barpartial_t (f + \log\det(I - \varphi\barvarphi))\,.
 \end{eqnarray*}
 Thus, it is sufficient to show
 \begin{equation}\label{p9}
\partial_t\barpartial_t (\log (\det A \det \barA) - \log (e^f\det g_0)) = \omega.
\end{equation}
First of all
\begin{eqnarray}\label{p10}
\frac{\partial^2}{\partial w^\alpha \partial w^\barbet} \log(e^f\det g_0)
&=& \frac{\partial z^i}{\partial w^\alpha} T_i \left( \frac{\partial z^\barj}{\partial w^\barbet} \right) T_\barj \log (e^f \det g_0 ) \\
&& + 
\frac{\partial z^i}{\partial w^\alpha} \frac{\partial z^\barj}{\partial w^\barbet} T_i T_\barj \log (e^f \det g_0 ) \nonumber
\end{eqnarray}
Next, recall that the compatibility condition $\varphi\lrcorner\,\omega = 0$ and the $f$-Kuranishi gauge condition $\barpartial^\ast_f \varphi = 0$
implies $\diver_f\,\varphi = 0$. It follows from this that
\begin{equation}\label{p11}
\partial_i\varphi^i{}_\barj = - \varphi^i{}_\barj\, \partial_i \log(e^f\det g_0).
\end{equation}
Using \eqref{c1} and \eqref{p11} we obtain
\begin{eqnarray}\label{p12}
&&\quad \frac{\partial^2}{\partial w^\alpha \partial w^\barbet}\ \log \det A \\
&&=  - \frac{\partial z^i}{\partial w^\alpha} \left(T_i \frac{\partial z^\barj}{\partial w^\barbet}\right) \varphi^k{}_\barj\, \partial_k \log(e^f \det g_0)
  - \frac{\partial z^i}{\partial w^\alpha} \frac{\partial z^\barj}{\partial w^\barbet} T_i(\varphi^k{}_\barj \partial_k \log(e^f \det g_0)), \nonumber
\end{eqnarray}
and using \eqref{c2}, \eqref{p11} and \eqref{c3} we obtain
\begin{eqnarray}\label{p13}
&& \qquad \frac{\partial^2}{\partial w^\alpha \partial w^\barbet}\ \log \det \barA  \\
&&= \frac{\partial z^i}{\partial w^\alpha}  T_i( (\frac{\partial z^\barj}{\partial w^\barbet} - \overline{b^j{}_\beta}) \partial_\barj \log (e^f \det g_0) )
 - \frac{\partial z^i}{\partial w^\alpha}\, \overline{b^\ell{}_\beta}\, \partial_\barl \left( (\partial_\bark \log (e^f \det g_0))\overline{\varphi^k{}_\bari}) \right).\nonumber
\end{eqnarray}
It follows from \eqref{p10}, \eqref{p12}, \eqref{p13} and also \eqref{c3} that 
\begin{eqnarray*}
\frac{\partial^2}{\partial w^\alpha \partial w^\barbet} \left( \log (\det A \det \barA) - \log (e^f\det g_0)\right) 
&=&
 \frac{\partial z^i}{\partial w^\alpha} \overline{b^j{}_\beta}\,(R_{i\barj} - f_{i\barj})\\
 &=&  \frac{\partial z^i}{\partial w^\alpha} \overline{b^j{}_\beta}\,g_{i\barj}.
\end{eqnarray*}
It remains to show 
\begin{eqnarray*}
\sqrt{-1} \frac{\partial z^i}{\partial w^\alpha} \overline{b^j{}_\beta}\, g_{i\barj}\, dw^\alpha \wedge dw^\barbet
= \sqrt{-1}g_{i\barj}\, dz^i \wedge dz^\barj = \omega.
\end{eqnarray*}
But 
\begin{equation}\label{p14}
\sqrt{-1} \frac{\partial z^i}{\partial w^\alpha} \overline{b^j{}_\beta}\, g_{i\barj}\, dw^\alpha \wedge dw^\barbet
= \sqrt{-1} \, \overline{b^j{}_\beta}\, g_{i\barj}\, (dz^i - \frac{\partial z^i}{\partial w^\baralp}\,dw^\baralp)\wedge dw^\barbet.
\end{equation}
Since the compatibility condition $\varphi\lrcorner\,\omega = 0$ implies $\varphi_{\bark\,\barj} = \varphi_{\barj\,\bark}$,
using \eqref{p3},
the first term of the right hand side of \eqref{p14} is equal to
\begin{eqnarray*}
\sqrt{-1} \, \overline{b^j{}_\beta}\, g_{i\barj}\, dz^i \wedge dw^\barbet
= \sqrt{-1}\, g_{i\barj}\, dz^i \wedge (dz^\barj +\overline{\varphi^j{}_\bark}\,dz^\bark )= \omega.
\end{eqnarray*}
The second term of the right hand side of \eqref{p14} vanishes because, using \eqref{p5} and \eqref{coor}, its coefficient is equal to
\begin{eqnarray*}
\sqrt{-1}\overline{(1-\varphi\barvarphi)^j{}_\ell\,\frac{\partial z^\ell}{\partial w^\beta}}g_{i\barj}\frac{\partial z^i}{\partial w^\baralp}
= \sqrt{-1}\lb \varphi_{\barp\barj}\barvarphi^{\barj\barq}\varphi_{\barq\barl}  - \varphi_{\barp\barl}\rb \overline{\frac{\partial z^\ell}{\partial w^\beta}\frac{\partial z^p}{\partial w^\alpha}}
\end{eqnarray*}
which is symmetric in $\alpha$ and $\beta$.
This completes the proof of Theorem \ref{ThmE}.
\end{proof}

\section{Scalar curvature as a moment map}

Let $(M, \omega)$ be a compact symplectic manifold, and suppose that $M_0=(M, J_0, \omega)$ is 
a \ka manifold with respect to an integrable complex structure $J_0$
where the K\"ahler metric $g_0$ is given by
$$ g_0(X,Y) = \omega (X, J_0 Y).$$
In this section, we consider
as in \cite{Donaldson84}
the space $\mathcal{J}_{int}(\omega)$ consisting of all $\omega$-compatible integrable complex structures $J$ so that
$(M, J, \omega)$ is a K\"ahler manifold.
For those complex structures $J$ obtained by deforming $J_0$ as in the Kodaira-Spencer theory, this space can be described as
an underlying real manifold of 
\begin{align}\label{icd}
\{\varphi\in A^{0,1}(M_0, T^\prime M_0)\ |\ \bar\pa_0\varphi=\frac{1}{2}[\varphi, \varphi], \ \varphi\lrcorner\,\omega=0\}.
\end{align}
The tangent space at $J_0$ is given by
\begin{align}\label{icd2}
T_{J_0}\mathcal{J}_{int}(\omega)=\{\psi\in A^{0,1}(M_0, T^\prime M_0)\ |\ \bar\pa_0\psi=0, \ \psi\lrcorner\omega=0\},
\end{align}
We consider the K\"ahler structure on $\mathcal{J}_{int}(\omega)$ induced from the K\"ahler structure on 
$A^{0,1}(M_0, T^\prime M_0)$, that is, the standard $L^2$ Hermitian inner product
\begin{align}
(\psi, \tau)_{L^2}=\int_{M_0} (\psi, \tau)_{g_0}\ \omega^m
\end{align}
for $\psi,\ \tau \in T_{J_0}\mathcal{J}_{int}(\omega)$ 
where,
in local holomorphic coordinates $(z^1, \dots, z^n)$ on $M_0$, if we write $g_0 = (g_{i\barj})$ and 
let $\psi=\psi^i{}_{\bar j}\,\frac{\pa}{\pa z^i}\otimes d\bar z^j$
and $\tau=\tau^i{}_{\bar j}\,\frac{\pa}{\pa z^i}\otimes d\bar z^j$, then
\begin{align*}
(\psi, \tau)_{g_0}&=\psi^i{}_{\bar j}\,\overline{\tau^k{}_{\barl}}\,g^{\ell\bar j}\,g_{i\bark}=\psi^i{}_\barj\,\overline{\tau_{\bari\,\barl}}\,g^{\ell\barj}
=\psi^i{}_\barj\,\overline{\tau_{\barl\,\bari}}\,g^{\ell\barj} =\psi^i{}_\barj\,\overline{\tau^j{}_{\bari}}=\tr(\psi\overline{\tau}).
\end{align*}
Twice the imaginary part $\Omega$ of $(\cdot, \cdot)_{L^2}$ gives a symplectic form on $\mathcal{J}_{int}(\omega)$, which is expressed as
\[\Omega(\psi, \tau)= 2\Im \int_{M_0}tr(\psi\overline{\tau})\ \omega^m.\]

For a tangent vector, i.e. an infinitesimal deformation, $\tau \in T_{J_0}\mathcal{J}_{int}(\omega)$, we consider a differentiable family $M_t$
with a real parameter $t$ in the direction of $\tau$ at $t=0$. Let $w^1, \cdots, w^m$ be local holomorphic coordinates for $M_t$ on the same coordinate neighborthood as $z^i$'s.
Then $w^\alpha$'s are smooth functions of $z^1, \overline{z^1}, \cdots, z^m, \overline{z^m}$ and $t$, and the equations \eqref{coor} through 
\eqref{c4} still hold true as was emphasized when the preliminary calculations were started in section 4. 
In particular, using \eqref{p3} and \eqref{localframe} we can express the complex structure $J_t$ of $M_t$ as
\begin{align}
J_t = & \sqrt{-1}\frac{\partial}{\partial w^\alpha}\otimes dw^\alpha - \sqrt{-1}  \frac{\partial}{\partial w^\baralp}\otimes dw^\baralp \label{h2}\\
=& \sqrt{-1}\lb((I-\varphi\barvarphi)^{-1})^i{}_j + \varphi^i{}_\bark\, \overline{((I-\varphi\barvarphi)^{-1})^k{}_\ell}\,\overline{\varphi^\ell{}_\barj}\rb\,
\frac{\partial}{\partial z^i }\otimes dz^j \nonumber\\
&+ \sqrt{-1} \lb((I - \varphi\barvarphi)^{-1})^i{}_\ell\, \varphi^\ell{}_\barj\, 
+ \sqrt{-1}\, \varphi^i{}_\bark ((I-\barvarphi \varphi)^{-1})^\bark{}_\barj\, \rb\frac{\partial}{\partial z^i }\otimes dz^\barj \nonumber\\
&- \sqrt{-1} \lb((I - \barvarphi\varphi)^{-1})^\bari{}_\barl\, \overline{\varphi^\ell{}_\barj}\, 
- \sqrt{-1}\, \overline{\varphi^i{}_\bark}\, ((I-\varphi \barvarphi)^{-1})^k{}_j\rb\, \frac{\partial}{\partial z^\bari }\otimes dz^j \nonumber\\
&- \sqrt{-1}\lb((I-\barvarphi\varphi)^{-1})^\bari{}_\barj + \overline{\varphi^i{}_\bark}\, ((I-\varphi\barvarphi)^{-1})^k{}_\ell\,\varphi^\ell{}_\barj\rb\,
\frac{\partial}{\partial z^\bari }\otimes dz^\barj\,. \nonumber
\end{align}
The derivative of $J_t$ in \eqref{h2} is computed using infinitesimal deformation $\tau$ in the form of
\begin{align}\label{h3}
\left.\frac{d}{dt} \right|_{t=0} J_t = 2\sqrt{-1} \tau^i{}_\barj\, \frac{\partial}{\partial z^i }\otimes dz^\barj 
- 2\sqrt{-1}\, \overline{\tau^i{}_\barj}\,\frac{\partial}{\partial z^\bari }\otimes dz^j.
\end{align}

Consider the action of the Hamiltonian group $Ham(\omega)$ of $(M, \omega)$ on $\mathcal{J}_{int}(\omega)$. 
For a smooth function $u$ we denote by $X_u$ its Hamiltonian vector field. Then the infinitesimal action of $X_u$
on $\mathcal{J}_{int}(\omega)$ at $J_0$ is given by
\begin{align}\label{h1}
L_{X_u} J_0 = 2\sqrt{-1} \nabla_{J_0}^{\prime\prime} X_u^\prime - 2\sqrt{-1}\nabla_{J_0}^\prime X_u^{\prime\prime},
\end{align}
see e.g. Lemma 2.3 in \cite{futaki06}. It follows from \eqref{h1} and \eqref{h3} that the infinitesimal action of the Hamiltonian flow of generated by $X_u$ is expressed as an infinitesimal deformation as 
\begin{align}\label{h4}
\tau =  \nabla_{J_0}^{\prime\prime} X_u^\prime.
\end{align}
The Lie algebra of $Ham(\omega)$ is 
the space $C^\infty(M)/\bfR$ of smooth functions modulo constant functions, and its dual space is 
$$C_0^\infty(M) := \{u\in C^\infty(M)\ |\ \int_M u\ \omega^m =0\},$$
The Hamiltonian vector field $X_u$ of a smooth function $u$ is given by
$$ i(X_u)\omega = du. $$
This shows 
\begin{align}\label{h4.5}
\sqrt{-1}g_{i\barj}\,X_u^{\prime i}\, d z^\barj = \barpartial u,
\end{align}
which implies
$$ \tau^i{}_\barj = - \sqrt{-1} \nabla_\barj \nabla^i u.$$
It follows that the symplectic form $\Omega$ for the infinitesimal deformation $\tau$ induced by the Hamiltonian vector field $X_u$ becomes 
\begin{align}
\Omega(\psi,\tau) = 2\Re \int_{M_0} \psi^i{}_\barj\, \nabla^\barj\nabla_i u\ \omega^m 
= 2\Re \int_{M_0} \tr (\psi\cdot \overline{\barpartial\nabla^\prime u})\ \omega^m\,.
\end{align}

In the infinite dimensional GIT picture, Donaldson has shown that  

\begin{theorem} [Donaldson \cite{Donaldson84}]
The moment map of $Ham(\omega)$-action on $\mathcal{J}_{int}(\omega)$ is
\[s-\bar s:\mathcal{J}_{int}(\omega)\to C_0^\infty(M) \, (\cong Lie(Ham(\omega))^*),\]
where $s$ is the scalar curvature of $\omega_J$, and $\bar s$ is the average of $s$ on $M$. 
\end{theorem}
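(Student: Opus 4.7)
Recall the moment-map condition: a map $\mu : \mathcal{J}_{int}(\omega)\to (\mathrm{Lie}\,Ham(\omega))^{\ast}$ is a moment map for the $Ham(\omega)$-action if, for every $u\in C^{\infty}(M)/\bfR$ and every $\psi\in T_{J}\mathcal{J}_{int}(\omega)$,
\[
d\langle\mu,u\rangle_{J}(\psi)\;=\;\Omega(\psi,\tau_{u}),
\]
where $\tau_{u}=\nabla^{\prime\prime}_{J_{0}}X_{u}^{\prime}$ is the infinitesimal action in \eqref{h4}. Taking the candidate pairing $\langle\mu(J),u\rangle=\int_{M}u\,(s_{J}-\bar s)\,\omega^{m}$, and noting that $\int_{M}u\,\bar s\,\omega^{m}$ is constant on $\mathcal{J}_{int}(\omega)$ because $\bar s$ is determined by $[\omega]$ alone, it is enough to prove
\begin{equation}\label{mmeq}
\int_{M}u\cdot\delta_{\psi}s\cdot\omega^{m}\;=\;2\,\Re\int_{M_{0}}\psi^{i}{}_{\barj}\,\nabla^{\barj}\nabla_{i}u\;\omega^{m},
\end{equation}
the right-hand side being the explicit formula for $\Omega(\psi,\tau_{u})$ already derived in the excerpt.

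The first step is to linearize the scalar curvature. Since $\omega$ is fixed and $g(X,Y)=\omega(X,JY)$, the variation of the K\"ahler metric along $\psi$ is controlled by $\delta J=2\sqrt{-1}(\psi-\bar\psi)$ from \eqref{h3}. Using the integrability $\barpartial\psi=0$ and the $\omega$-compatibility $\psi\lrcorner\,\omega=0$ (equivalently, $\psi_{\bari\,\barj}:=g_{k\bari}\,\psi^{k}{}_{\barj}$ is symmetric in its two anti-holomorphic indices), a standard K\"ahler-geometry calculation yields
\begin{equation}\label{vars}
\delta_{\psi}s \;=\; \nabla^{i}\nabla^{\barj}\psi_{i\barj}\;+\;\nabla^{\bari}\nabla^{j}\overline{\psi_{j\bari}},
\end{equation}
the two terms being complex conjugates of one another, so that $\delta_{\psi}s$ is real. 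The Ricci-curvature terms that arise from commuting $\nabla_{i}$ and $\nabla_{\barj}$ via the Ricci identity cancel by virtue of the symmetry and $\barpartial$-closedness of $\psi$.

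Plugging \eqref{vars} into the left-hand side of \eqref{mmeq} and integrating by parts twice on the compact K\"ahler manifold $(M,\omega)$ gives
\[
\int_{M}u\,\nabla^{i}\nabla^{\barj}\psi_{i\barj}\,\omega^{m} \;=\; \int_{M}\psi_{i\barj}\,\nabla^{\barj}\nabla^{i}u\,\omega^{m} \;=\; \int_{M}\psi^{i}{}_{\barj}\,\nabla^{\barj}\nabla_{i}u\,\omega^{m},
\]
and the complex-conjugate identity for the second term of \eqref{vars}. Adding the two produces exactly $2\,\Re\int_{M}\psi^{i}{}_{\barj}\nabla^{\barj}\nabla_{i}u\,\omega^{m}$, which establishes \eqref{mmeq}.

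The main obstacle is the clean derivation of \eqref{vars}: the metric, Chern connection, and Ricci form all depend implicitly on $J$, so that varying them simultaneously under an $\omega$-compatible infinitesimal deformation generates many commutator and Ricci-identity terms. The content of the theorem is precisely that these curvature corrections cancel on $\mathcal{J}_{int}(\omega)$, leaving the fourth-order expression \eqref{vars}; in the K\"ahler setting this cancellation is most transparent when one writes the scalar curvature as $s=-g^{i\barj}\partial_{i}\partial_{\barj}\log\det g$ and differentiates using the matrix identities in the style of the preliminary computations of section 4. Once \eqref{vars} is in hand, the remainder of the proof is routine integration by parts, in complete parallel with the calculations of section 4, and the statement of the theorem follows.
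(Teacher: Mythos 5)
Your overall strategy coincides with the paper's: both reduce the moment-map identity to the single equation \eqref{mpe}, i.e.\ to computing the linearization $\delta_\psi s$ of the scalar curvature at fixed $\omega$ and then integrating by parts against $u$. The difference is that the entire substance of the paper's argument lies in actually deriving that linearization, and this is exactly the step you do not carry out. You assert that ``a standard K\"ahler-geometry calculation yields'' $\delta_\psi s=\nabla^i\nabla^{\bar j}\psi_{i\bar j}+\overline{(\cdots)}$ and that the Ricci-identity terms ``cancel by virtue of the symmetry and $\bar\partial$-closedness of $\psi$'', but no such cancellation is exhibited; your closing paragraph even concedes that ``the content of the theorem is precisely that these curvature corrections cancel''. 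That concession is the gap. The paper supplies precisely this missing computation by parametrizing $J_t$ through the Beltrami differential $\varphi(t)$, writing $s(t)=-\Delta_t\log\det g_t$ with $\det g_t=|\det A|^{-2}\det(I-\varphi\bar\varphi)^{-1}\det g_0$ from Proposition \ref{p6}, and differentiating each factor at $t=0$ in the fixed $z$-coordinates using \eqref{c1}--\eqref{c4} and \eqref{h6.1}--\eqref{h9}; no Ricci commutators ever arise because the variation of $\Delta_t$ and of $\log\det A$, $\log\det\barA$ are computed directly. The outcome is $\frac{d}{dt}\big|_{t=0}s(J_t)=\bar\partial_0^*\diver_0\psi+\partial_0^*\overline{\diver_0\psi}$, after which the integration by parts proceeds as you describe.

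Two smaller points. First, your index notation $\psi_{i\bar j}$ is not well formed: for $\psi=\psi^i{}_{\bar j}\,\frac{\partial}{\partial z^i}\otimes dz^{\bar j}$, lowering the holomorphic index with $g_{i\bar k}$ produces a tensor $\psi_{\bar k\bar j}$ with two antiholomorphic indices (the symmetric tensor appearing in $\psi\lrcorner\,\omega=0$), so the expression $\nabla^i\nabla^{\bar j}\psi_{i\bar j}$ must be rewritten as a double divergence of $\psi^i{}_{\bar j}$ before the integration by parts can be checked. Second, once it is so rewritten the sign must be verified against the paper's $\bar\partial_0^*\diver_0\psi$, since $\bar\partial_0^*$ carries a minus sign in coordinates; a sign error here would replace the moment map $s-\bar s$ by its negative. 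Neither point is fatal, but neither can be settled without the derivation of the linearization formula that the proposal omits, and that derivation is where the real work of the theorem lies.
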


In the following, we will provide a proof of this theorem by using the Kodaira-Spencer theory.

\begin{proof}Since $\bar s$ is a topological constant, we only need to check that for any $u \in C_0^\infty(M)$, 
it is sufficient to show for any $\psi\in T_{J_0}\mathcal{J}$, 
\begin{align}\label{mpe}
\int_{M_0} \frac{d}{dt}\Big |_{t=0}s(J_t) u\ \omega^m = 2\Re \int_{M_0}\tr (\psi\cdot\overline{\bar\pa_0 \nabla^\prime u} )\ \omega^m,
\end{align}
where $J_t$ is the complex structure obtained by deforming $J_0$ along $\psi$-direction. We let $\varphi(t)$ be the corresponding Beltrami differential
with $\frac{d\varphi}{dt}|_{t=0}=\psi\in A^{0,1}(M_0, T^\prime M_0)$. 
Recall \eqref{p7}, from which we obtain
\begin{equation}\label{h5}
\det(g_{\ga\bar\gb})=|\det A|^{-2}\det(I-\varphi\barvarphi)^{-1}g_0,
\end{equation}
where $A=\det(a^\alpha{}_i)$, $g_0=\det (g_{i\bar j})$. From now on, we write $g$ instead of $g_0$ for the notational convenience. 
Recall also \eqref{p7} in Proposition \ref{p6}, from which we obtain
\begin{equation}\label{h6}
g^{\alpha\barbet} = a^{\alpha}{}_i\, (I - \varphi\barvarphi)^i{}_k\, g^{k\barj}\, \overline{a^\beta{}_j}.
\end{equation}

Then
the scalar curvature on $(X_t, \omega_t)$ is
\begin{align}\label{scalar}
s(t)&=-\Delta_t\log\det(g_{\ga\bar\gb})\\
&=\Delta_t\log A+\Delta_t\log\bar A+\Delta_t\log\det(I-\varphi\barvarphi)-\Delta_t\log g,\nonumber
\end{align}
where using \eqref{localframe}, \eqref{h6} and \eqref{c1}
\begin{align*}
\Delta_t&=g^{\ga\bar\gb}\frac{\pa^2}{\pa w^\ga\pa\bar w^\gb}\\
&= a^{\ga}{}_i\, (I - \varphi\barvarphi)^i{}_k\, g^{k\barj}\, \overline{a^\beta{}_j}
((I - \varphi\barvarphi)^{-1})^\ell{}_p \, b^p{}_\alpha\, T_\ell\, (\overline{((I-\varphi\barvarphi)^{-1})^p{}_q\, b^q{}_\beta}\,T_\barp\\
&= - g^{i\barj}(T_i\, \overline{a^\beta{}_j})\,\overline{((I-\varphi\barvarphi)^{-1})^p{}_q\, b^q{}_\beta} T_\barp
 + g^{i\barj}(T_i\overline{((I-\varphi\barvarphi)^{-1})^p{}_j})T_\barp \\
 &\ \quad + g^{i\barj}\,\overline{((I-\varphi\barvarphi)^{-1})^p{}_j}\, T_i\,\overline{T_\barp})\\
 &= - g^{i\barj}\,\overline{\partial_j\varphi^q{}_\bari}\,\overline{((I-\varphi\barvarphi)^{-1})^p{}_q}\, T_\barp
 + g^{i\barj}(T_i\overline{((I-\varphi\barvarphi)^{-1})^p{}_j})T_\barp \\
 &\ \quad + g^{i\barj}\,\overline{((I-\varphi\barvarphi)^{-1})^p{}_j}\, T_i\,\overline{T_\barp}).
\end{align*}
Since $\psi=\frac{d}{dt}\Big |_{t=0}\varphi$, then
\begin{align}\label{h6.1}
\frac{d}{dt}\Big |_{t=0}\Delta_t
=-g^{i\barj}\psi^k{}_{\bar j}\pa_i\pa_k-g^{i\bar j}(\pa_i\psi^k{}_{\bar j})\pa_k
-g^{i\barj}\overline{\psi^k{}_\bari}\pa_\barj\pa_\bark\, - g^{i\barj}(\overline{\pa_j\psi^k{}_\bari})\pa_\bark.
\end{align}
By \eqref{coor}, we compute
\begin{equation}\label{h7}
 \frac{d}{dt}\Big |_{t=0}\Big(\frac{\pa w^\ga}{\pa\bar z^j}\Big)=\frac{\pa}{\pa\bar z^j}\Big(\frac{dw^\ga}{dt}\Big|_{t=0}\Big)=\,\delta^\ga{}_{i}\,\psi^i{}_\barj,
\end{equation}
We also compute
\begin{equation}\label{h8}
\frac{d}{dt}\Big |_{t=0}\log \det A= \delta^k{}_{\ga}\pa_k\Big(\frac{dw^\ga}{dt}\Big|_{t=0}\Big).
\end{equation}
By \eqref{h7} and \eqref{h8} we get
\begin{equation}\label{h9}
\Delta_0\left (\frac{d}{dt}\Big |_{t=0}\log \det A\right )=g^{i\bar j}\pa_i\pa_k\psi_{\bar j}^k.
\end{equation}
Hence from , the linearization of the RHS of \eqref{scalar} is
\begin{align}
\frac{d}{dt}\Big |_{t=0}s(J_t)&=\Delta_0 \frac{d}{dt}\Big |_{t=0} (\log \det A + \log \det \barA) - (\frac{d}{dt}\Big |_{t=0} \Delta_t)\det g\\
&=\bar\pa_0^*div_0\psi + \partial_0^\ast \overline{\diver_0 \psi}.
\end{align}
Finally we have
\begin{align*}
\int_{M_0} \frac{d}{dt}\Big |_{t=0}s(J_t) u\ \omega^m&=\int_{M_0}(\barpartial_0^\ast\diver_0\psi + \partial_0^\ast \overline{\diver_0 \psi})\, u\, \omega^m\\
&= 2 \Re \int_{M_0} \tr\,\psi\ \overline{\bar\pa_0\nabla^\prime u} \ \omega^m\\
&=\Omega\left (\psi, \bar\pa_0\nabla^{\prime}u\right ).
\end{align*}
\end{proof}

\begin{remark}
Let $P=\bar\pa_0\nabla^{\prime}: C^\infty_0(X_0)(\cong Lie(G))\to T_{J_0}\mathcal{J}$.
It was observed by Donaldson in \cite{Donaldson84} that if $\omega_{J_0}$ has constant scalar curvature, then
\begin{align}
T_{J_0}\mathcal{J}_{int}(\omega)\cong \ker P^*, \qquad P^*=\bar\pa_0^*div_0,
\end{align}
i.e.
\begin{align*}
T_{J_0}\mathcal{J}_{int}(\omega)&=\{\psi\in A^{0,1}(M_0, T^\prime M_0)|\ \bar\pa_0\psi=0, \ \psi\lrcorner\omega_{\text{csck}}=0\}\\
&=\{\psi\in A^{0,1}(M_0, T^\prime M_0)|\ \bar\pa_0\psi=0, \ \bar\pa_0^*div_0\psi=0\},
\end{align*}
where $\bar\pa_0^*$ and $div_0$ are with respect to the csck metric. 

On the other hand, if $\omega_0$ is \ke form,  It has been shown in \cite{CaoSunYauZhang2022} \cite{Sun12} \cite{SunYau11} that if $\bar\pa_0\varphi=\frac{1}{2}[\varphi, \varphi]$, then $\bar\pa_0^*\varphi=0$ if and only if $div_0\varphi=0$. Furthermore, $\varphi\lrcorner\omega_{\text{KE}}=0$ automatically holds. In particular this implies that the tangent space of the Kuranishi slice in the Kodaira-Spencer theory is a subspace of $T_{J_0}\mathcal{J}_{int}(\omega)$.
\end{remark}

Next, we consider another moment map picture introduced by Donaldson \cite{Donaldson17} (see also \cite{LeeSturmWang})
on Fano manifolds, and describe it using our theory of the geometry of Kuranishi family developed above.  Namely, for a Fano manifold $M$, 
another Hermitian metric and a symplectic form on $\J:=\J_{int}\lb\omega\rb$ are introduced as an application of Theorem \ref{ThmD}, and 
consider the moment map for the Hamiltonian group action of $(M, \omega)$ where 
$\omega$ is a symplectic form on $M$ such that $\lsb \omega\rsb=2\pi c_1\lb M_J \rb$ with respect to some complex structure 
$J\in\J$ such that $M_J$ is a Fano manifold. 

Consider the Kuranishi family satsifying (4), (7) and (8). By Theorem \ref{ThmD}, $\omega$ is a K\"ahler form for any
member of the family.
We let $V_0=\int_M \lb 2\pi c_1\lb M\rb\rb^m$ and for each $J\in\J$ of the Kuranishi family 
we let $M_J$ be the corresponding Fano manifold. 
We also let $\Omega_J$ be the unique volume form on $M_J$ such that
\[
\begin{cases}
\rc_J\lb \Omega_J\rb=\omega\\
\int_{M_J}\Omega_J=V_0\,.
\end{cases}
\]
Let $f_J=\log\frac{\Omega_J}{\omega^m}$ be the normalized Ricci potential so that $\Omega_J = e^{f_J} \omega^m$. 

For any $J_0\in\J$ let $M_0:=M_{J_0}$ with local holomorphic coordinates $z^1,\cdots,z^m$, and let $\omega=\ii g_{i\bar j}dz^i\wedge d\bar z^j$ and $g=(g_{i\bar j})$.
But when we want to emphasize that $\omega$ and $g$ were expressed in terms of local holomorphic coordinates with respect to $J_0$ we
denote $\omega$ and $g$ by $\omega_0$ and $g_0$. 
We also write $f_0:=f_{J_0}$. 
A simple computation shows that $\bar\pa_0\dd_{f_0}\psi=0$ for each 
$$\psi\in T_{J_0}\J = \{\psi\in A^{0,1}(M_0, T^\prime M_0)\ |\ \bar\pa_0\psi=0, \ \psi\lrcorner\omega=0\}$$ 
(c.f. \eqref{icd2}) and there exists a unique complex valued smooth function $\xi_\psi$ such that
\begin{equation}\label{new0}
\begin{cases}
\bar\pa_0\xi_\psi=\dd_{f_0}\psi\\
\int_{M_0}\xi_\psi\ \Omega_0=0.
\end{cases}
\end{equation}

We define a Hermitian metric on $\J$ in the following way. For any $\psi,\tau \in T_{J_0}\J$ let
\begin{equation}\label{new}
\langle \psi,\tau\rangle=\int_{M_0}\lb \text{tr}\lb\psi\bar\tau\rb -\xi_\psi\bar{\xi_{\tau}}\rb\ \Omega_0.
\end{equation}
\begin{theorem}\label{ThmG} The Hermitian form \eqref{new} is positive definite.
\end{theorem}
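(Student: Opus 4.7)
The plan is to decompose $T_{J_0}\mathcal J$ orthogonally (with respect to the weighted $L^2$ product) as $T^{\mathrm{harm}} \oplus T^{\mathrm{ham}}$, where
\begin{equation*}
T^{\mathrm{harm}} = \{\psi\in T_{J_0}\mathcal J : \bar\partial_f^\ast \psi = 0\} \quad\text{and}\quad T^{\mathrm{ham}} = \{\bar\partial X_h : h \in C^\infty_0(M,\mathbb{C})\},
\end{equation*}
i.e.\ the Kuranishi slice and the complexified Hamiltonian part. Orthogonality is immediate from $(\psi_1, \bar\partial X_h)_f = (\bar\partial_f^\ast \psi_1, X_h)_f = 0$. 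The constraint $\psi \lrcorner \omega = 0$ (symmetry of $\psi_{\bar i \bar j}$) yields $(\mathrm{div}_{f_0}\psi)_{\bar m} = -g_{i\bar m}(\bar\partial_f^\ast \psi)^i$, and combined with $\bar\partial \xi_\psi = \mathrm{div}_{f_0}\psi$ this gives $(\bar\partial_f^\ast \psi)^i = -\nabla^i \xi_\psi$. In particular $\xi_{\psi_1} = 0$ for $\psi_1 \in T^{\mathrm{harm}}$, so $\langle\psi_1,\psi_1\rangle = \|\psi_1\|_f^2 \ge 0$, with equality iff $\psi_1 = 0$. For existence of the splitting, observe that any eigenfunction $v$ of $\Delta_f$ with eigenvalue $1$ has vanishing full Hessian (Corollary~\ref{CorC} applied to both $X_v$ and $X_{\bar v}$ gives $v_{\bar i \bar j} = 0 = v_{ij}$), so an integration by parts using $\bar\partial\psi=0$ shows $(\xi_\psi, v)_f = 0$; one can then solve $\Delta_f h - h = -\sqrt{-1}\,\xi_\psi$ uniquely for $h \in C^\infty_0 \cap \ker(\Delta_f - 1)^\perp$, and $\psi - \bar\partial X_h \in T^{\mathrm{harm}}$.

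For $\psi_2 = \bar\partial X_h \in T^{\mathrm{ham}}$, one has $\psi_2^i{}_{\bar j} = -\sqrt{-1}\,g^{i\bar m} h_{\bar j\bar m}$, and applying the Bochner-Kodaira formula~\eqref{Boch11} to the $(0,1)$-form $h_{\bar m}$ together with the Fano identity $R_{i\bar j} - f_{i\bar j} = g_{i\bar j}$ yields $g^{i\bar m}(\nabla_i+f_i)h_{\bar j\bar m} = \nabla_{\bar j}(h - \Delta_f h)$; hence $\mathrm{div}_{f_0}\bar\partial X_h = \sqrt{-1}\,\bar\partial(\Delta_f h - h)$ and $\xi_{\psi_2} = \sqrt{-1}(\Delta_f h - h)$ after normalization. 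The key technical step, which will be the main obstacle, is the Bochner-type identity
\begin{equation*}
\bar\partial_f^\ast\,\bar\partial X_u = (\lambda - 1)\, X_u \qquad \text{whenever } \Delta_f u = \lambda u,
\end{equation*}
proved by the same reduction applied to $(\bar\partial_f^\ast\bar\partial X_u)^i = \sqrt{-1}\,g^{i\bar m}g^{k\bar j}(\nabla_k+f_k)u_{\bar j \bar m}$, now using $\Delta_f u_{\bar m} = \lambda u_{\bar m}$ (which holds because $[\Delta_f,\nabla_{\bar m}]=0$ on scalars, another consequence of the Fano identity). Taking inner products with $X_u$ and using $\|X_u\|_f^2 = \|\bar\partial u\|_f^2 = \lambda\|u\|_f^2$ yields $\|\bar\partial X_u\|_f^2 = \lambda(\lambda-1)\|u\|_f^2$.

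To conclude, expand $h = \sum c_k u_k$ in an orthonormal $\Delta_f$-eigenbasis with $\lambda_k \ge 1$ (Corollary~\ref{CorC}); the contributions decouple because $X_{u_k}$ for distinct eigenvalues are $L^2$-orthogonal, so
\begin{equation*}
\|\bar\partial X_h\|_f^2 - \|\xi_{\psi_2}\|_f^2 = \sum_k \bigl[\lambda_k(\lambda_k - 1) - (\lambda_k - 1)^2\bigr]|c_k|^2\|u_k\|_f^2 = \sum_k (\lambda_k - 1)|c_k|^2\|u_k\|_f^2 \ge 0,
\end{equation*}
with equality iff $c_k = 0$ whenever $\lambda_k > 1$, i.e.\ iff $\bar\partial X_h = 0$. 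Combined with $\|\psi_1\|_f^2 \ge 0$, we obtain $\langle\psi,\psi\rangle \ge 0$ with equality forcing $\psi_1 = 0$ and $\bar\partial X_h = 0$, hence $\psi = 0$.
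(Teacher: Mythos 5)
Your argument is correct and follows essentially the same route as the paper: split $\psi$ into a $\barpartial_f^\ast$-coclosed piece and a piece of the form $\barpartial$ of a gradient (Hamiltonian) field, show that $\xi$ vanishes on the first piece so that it contributes $\|\psi_1\|_f^2\ge 0$, and reduce positivity on the second piece to the eigenvalue bound $\lambda_1(\Delta_f)\ge 1$ of Corollary \ref{CorC}. Two differences in execution are worth recording. First, for the coclosed piece the paper invokes Lemma \ref{K1} and Theorem \ref{ThmA} with $q=2$ to conclude $\mu\lrcorner\,\omega=0$ and hence $\diver_f\mu=0$; you instead read off $(\barpartial_f^\ast\psi_1)^i=-\nabla^i\xi_{\psi_1}$ directly from the constraint $\psi_1\lrcorner\,\omega=0$, which is legitimate because your splitting is performed inside $T_{J_0}\mathcal{J}_{int}(\omega)$, so both summands automatically satisfy the compatibility condition; this streamlines that step. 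Second, where the paper uses the single integral identity $\int_M|\Delta_f h|^2e^f\omega^m=\int_M\bigl(|\nabla^{\prime\prime}\nabla^{\prime\prime}h|^2+|\nabla^{\prime\prime}h|^2\bigr)e^f\omega^m$, you diagonalize $\Delta_f$ and use the pointwise identity $\barpartial_f^\ast\barpartial X_u=(\lambda-1)X_u$; the two computations are equivalent, and your version has the small extra cost of justifying the Fredholm solvability of $\Delta_f h-h=-\sqrt{-1}\,\xi_\psi$. On that point, one caveat: your parenthetical claim that $v_{ij}=0$ for $v\in\ker(\Delta_f-1)$ is not justified, since it would require $\bar v$ to be a $\Delta_f$-eigenfunction, which fails for non-constant $f$ because $\Delta_f$ is not a real operator. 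Fortunately it is also not needed: two integrations by parts reduce $(\xi_\psi,v)_f$ to $-\int_M\psi^k{}_{\bar i}\,g^{j\bar i}\,\overline{v_{\bar j\bar k}}\,e^f\omega^m$, which vanishes using only $v_{\bar j\bar k}=0$, and that much is supplied by Corollary \ref{CorC}(2). With that repair the splitting, and hence the whole proof, stands.
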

\begin{proof}For the notational convenience we suppress $0$ and write $J$, $M$, $\barpartial$  and $f$ instead of $M_0$, $J_0$,
$\barpartial_0$ and $f_0$.
We need to show
\begin{equation}\label{new1}
\langle \psi,\psi\rangle=\int_{M}\lb |\psi|^2 -|\xi_\psi|^2\rb e^f \omega^m \ge 0,
\end{equation}
and that the equality holds if and only if $\psi = 0$.

Since $\barpartial \psi = 0$, by the Hodge theory, we have $\psi = \mu +\barpartial \tau$ where $\barpartial_f^\ast \mu = 0$ and 
$\tau \in A^0(T^\prime M)$. Then by Lemma \ref{K1} we have
$$\Delta_f (\mu\lrcorner\,\omega) = \frac12\,\mu\lrcorner\,\omega.$$
Then by Theorem \ref{ThmA} with $q=2$ we obtain 
$ \mu\lrcorner\,\omega = 0$. 
It follows from this and $\barpartial_f^\ast \mu = 0$ we have $\diver_f\mu = 0$. Hence we can assume $\psi = \barpartial \tau$. 
Put $\tau_\flat := g_{i\barj}\tau^i dz^\barj$. Then $\psi\lrcorner\, \omega = 0$ implies $\barpartial \tau_\flat = 0$. 
Since $M$ is Fano and $H^{0,1}(M) = 0$, there exists a complex valued smooth function $h$ such that
$\tau_\flat = \barpartial h$. Note that 
\begin{equation}\label{new2}
\diver_f\tau = -\Delta_f h.
\end{equation}
On the other hand, by a straightforward computation
\begin{equation}\label{new3}
(\diver_f\psi)_\barj = (\diver_f(\barpartial \tau))_\barj = \nabla_\barj(\diver_f \tau) + \tau_\barj = \nabla_\barj (\diver_f \tau + h).
\end{equation}
It follows from \eqref{new0}, \eqref{new2} and \eqref{new3} that
\begin{equation}\label{new4}
\xi_\psi = -\Delta_f h + h.
\end{equation}
Thus \eqref{new1} is equivalent to
\begin{equation}\label{new5}
\int_M (-\Delta_f h + h)^2 e^f \omega^m \le \int_M |\nabla^{\prime\prime}\nabla^{\prime\prime}h|^2 e^f \omega^m.
\end{equation}
Using the equality
\begin{equation*}
\int_M (\Delta_f h)^2 e^f \omega^m = \int_M (|\nabla^{\prime\prime}\nabla^{\prime\prime}h|^2 + |\nabla^{\prime\prime} h|^2) e^f \omega^m
\end{equation*}
one sees that \eqref{new5} is equivalent to
\begin{equation}\label{new6}
\int_M h^2 e^f \omega^m \le \int_M |\nabla^{\prime\prime}h|^2 e^f \omega^m.
\end{equation}
Note $\int_M h\, e^f\omega^m = 0$ as follows from \eqref{new0}. 
But \eqref{new6} holds by Corollary \ref{CorC}, and the equality occurs when $\Delta_f h = h$, that is, $\tau$ is a holomorphic
vector field. Thus $\psi = \barpartial \tau = 0$. 
This completes the proof of Theorem \ref{ThmG}.
\end{proof}

As above we denote by $G=Ham\lb M,\omega\rb$ with Lie algebra $\g\cong C^\infty\lb M, \bfR\rb/\bfR$. We can identify the dual $\g^*$ with the space of 
real 2m-forms 
$$ \{ \Omega - \omega^m \ |\ \int_M (\Omega - \omega^m) = 0\},$$
and the pairing $\lb\cdot,\cdot\rb:\g\times\g^*\to \R$ is given by $\lb h,\Omega - \omega^m\rb=\int_M h(\Omega - \omega^m)$ for any $h\in\g$ and $\Omega - \omega^m \in\g^*$. The group $G$ acts on $\J$. 
Minus of the twice the imaginary part of the Hermitian metric \eqref{new} defines a symplectic form, and using \eqref{h4} and \eqref{h4.5} the symplectic form
is expressed by
$$
-2\Re \int_{M_0} (\tr (\psi\cdot\overline{\bar\pa \nabla^\prime u} ) - \xi_\psi\, (u-\Delta_{f_0} u))\ \Omega_0
$$
Donaldson showed the following. 
The original version of Donaldson's construction is different. It is easier to use this version to do local computations.
\begin{proposition}\label{new7}
The map $\mu:\J\to \g^*$ 
\[
\mu\lb J\rb=\Omega_J - \omega^m = e^{f_J}\omega^m - \omega^m,
\]
is the moment map for the action of $Ham(\omega)$, and a zero of $\mu$ gives a K\"ahler-Einstein Fano manifold.
\end{proposition}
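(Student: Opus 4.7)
The strategy handles the two assertions separately. The first, that $\mu(J)=0$ characterizes K\"ahler-Einstein structures, is immediate: $\mu(J)=0$ forces $\Omega_J=\omega^m$, hence $e^{f_J}\equiv 1$, and since $\int_M e^{f_J}\omega^m=V_0=\int_M\omega^m$ is built into the definition of $f_J$, one gets $f_J\equiv 0$; substitution into $\rc_J-\omega=\sqrt{-1}\partial_J\bar\partial_J f_J$ yields $\rc_J=\omega$, and the converse is clear.

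For the moment map property, fix $J_0\in\J$ with Ricci potential $f_0=f_{J_0}$ and volume form $\Omega_0=e^{f_0}\omega^m$. Given $\psi\in T_{J_0}\J$ and $h\in C_0^\infty(M)\cong\g$, I will verify
\[ \int_M h\,\dot f_{J_s}\big|_{s=0}\,\Omega_0 \;=\; \omega_\J(\hat h,\psi), \]
where $J_s$ is any real one-parameter family with $\dot J|_0=\psi$, and $\hat h=\bar\partial_0\nabla'h$ is the infinitesimal action at $J_0$ (computed in Section~5 to lie in $T_{J_0}\J$). The left-hand side comes from differentiating $\mu(J_s)=(e^{f_{J_s}}-1)\omega^m$. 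Since Theorem~\ref{ThmE} presupposes the $f$-Kuranishi gauge, I will first gauge-fix: on the Fano manifold $M_0$, the constraint $\psi\lrcorner\omega=0$ together with $H^{0,1}(M_0)=0$ forces the $\bar\partial$-exact part of $\psi$ in the Hodge decomposition to take the form $\hat u=\bar\partial_0\nabla'u$ for a unique $u\in C_0^\infty(M)$, so write $\psi=\psi_{\mathrm{slice}}+\hat u$ with $\psi_{\mathrm{slice}}$ harmonic. Letting $\phi_s$ denote the Hamiltonian flow of $X_u$ and setting $\widetilde J_s=\phi_s^{-1\,*}J_s$, the family $\widetilde J_s$ lies in the Kuranishi slice to first order. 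Applying Theorem~\ref{ThmE} to $\widetilde J_s$ gives $\widetilde f_{J_s}=f_0+\log\det(I-\widetilde\varphi(s)\overline{\widetilde\varphi(s)})+c(s)$; since $\widetilde\varphi(s)=O(s)$, the log-det contribution is $O(s^2)$, and the normalization constraint forces $\dot c(0)=0$, so $\dot{\widetilde f}|_0=0$. Combined with the equivariance $f_{\phi^*J}=\phi^*f_J$, this yields $\dot f_{J_s}|_0=X_u(f_0)$, and the left-hand side collapses to $\int_M h\,L_{X_u}\Omega_0$.

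For the right-hand side, I will unfold $\omega_\J(\hat h,\psi)=-2\,\mathrm{Im}\langle\hat h,\psi\rangle$ using \eqref{new}. The Kuranishi slice and the gauge directions are orthogonal in the Hermitian form \eqref{new}: for any slice element $\psi_{\mathrm{slice}}$, the conditions $\bar\partial_f^*\psi_{\mathrm{slice}}=0$ and $\psi_{\mathrm{slice}}\lrcorner\omega=0$ imply $\mathrm{div}_{f_0}\psi_{\mathrm{slice}}=0$ (as in Lemma~\ref{D3}), which via \eqref{new0} forces $\xi_{\psi_{\mathrm{slice}}}=0$; and the $\Omega_0$-weighted pairing of a harmonic element against a $\bar\partial$-exact gauge direction vanishes by integration by parts. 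Hence $\omega_\J(\hat h,\psi)=\omega_\J(\hat h,\hat u)$. The main obstacle, and the analytic heart of the proof, is the gauge-gauge identity
\[ \omega_\J(\hat h,\hat u)\;=\;\int_M h\,L_{X_u}\Omega_0, \]
established by computing $\xi_{\hat h}$ and $\xi_{\hat u}$ from $\bar\partial\xi=\mathrm{div}_{f_0}(\cdot)$ via Lemma~\ref{K1}-type manipulations (these take the shape $\xi_{\hat h}=-\Delta_{f_0}h+h+\mathrm{const}$), substituting into \eqref{new}, and integrating by parts against the weighted volume form $\Omega_0$. The role of the $\xi$-correction in \eqref{new} is laid bare in this step: it cancels the part of the naive $L^2$-pairing that would otherwise contaminate the symplectic form, so that the induced pairing on Hamiltonian directions recovers the Poisson-type structure dual to the Lie derivative of $\Omega_0$.
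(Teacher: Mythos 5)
There is a genuine gap in your computation of the left-hand side $\int_M h\,\dot f_{J_s}|_{s=0}\,\Omega_0$. Your plan decomposes $\psi\in T_{J_0}\J$ as a harmonic (slice) part plus a gauge part of the form $\bar\pa_0\nabla'u$ and then computes the variation of $f_J$ in the gauge direction by pulling back along the Hamiltonian flow of $X_u$ and invoking equivariance. But as the proof of Theorem \ref{ThmG} in the paper shows, the $\bar\pa$-exact part of $\psi$ is $\bar\pa_0\nabla'h$ for a \emph{complex-valued} function $h$, whereas only $-\sqrt{-1}\,\bar\pa_0\nabla'u$ with $u$ \emph{real} is tangent to the $Ham(\omega)$-orbit (note also the factor $-\sqrt{-1}$ from \eqref{h4} and \eqref{h4.5}, which you drop and which matters when you take real and imaginary parts). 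The directions $\bar\pa_0\nabla'v$ with $v$ real lie in $T_{J_0}\J$ (they are $\bar\pa$-closed and contract to zero with $\omega$ since $\nabla_{\bar i}\nabla_{\bar j}v$ is symmetric), they are orthogonal to the harmonic part, and they are not generated by any $\omega$-preserving diffeomorphism; for them $\dot f_{J_s}|_0=2\Re\xi_\psi=2\Re(-\Delta_{f_0}v+v)$ is not a Lie derivative of $f_0$. So your case analysis (slice $\Rightarrow$ $\dot f=0$ by Theorem \ref{ThmE}; Hamiltonian $\Rightarrow$ $\dot f=X_uf_0$ by equivariance) fails to cover a whole subspace of $T_{J_0}\J$, and the verification of the moment map identity is incomplete precisely in the directions where the $\xi$-correction in \eqref{new} does real work. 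In addition, the ``gauge--gauge identity'' that you call the analytic heart of the argument is asserted rather than carried out.

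For comparison, the paper avoids the decomposition entirely: it differentiates the defining equation $\Ric(\Omega_t)=\omega_t$ in local coordinates along an arbitrary family with $\dot\varphi(0)=\psi$, using the identities \eqref{coor}--\eqref{c4} and \eqref{p7}, to obtain $f'(0)=2\Re\xi_\psi$ uniformly in $\psi$ (equations \eqref{new9}--\eqref{new12}); the moment map identity then reduces to a single integration by parts using $\bar\pa_0\xi_\psi=\diver_{f_0}\psi$. If you want to salvage your approach, you must either treat the imaginary-Hamiltonian directions $\bar\pa_0\nabla'v$ by a separate direct computation of $\dot f$, or compute $\dot f$ for a general direction as the paper does. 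Your treatment of the statement that zeros of $\mu$ are K\"ahler--Einstein metrics is correct.
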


\begin{proof}
For any $\psi\in T_{J_0}\J$, let us consider a differentiable family of deformations $\varphi(t)\in A^{0,1}\lb M_0, T^{\prime}M_0\rb$ 
with a real parameter $t$ such that 
\begin{enumerate}
\item $\bar\pa_0\varphi(t)=\frac 12 \lsb \varphi(t),\varphi(t)\rsb$.
\item $\varphi(0)=0$ and $\dt\varphi(t)=\psi$. 
\end{enumerate}  
Then we claim 
\begin{equation}\label{new8}
\dt \mu\lb J_t\rb=2\Re\xi_\psi\,\Omega_0. 
\end{equation}
To prove this, first recall $\Ric(\Omega_t) = \omega_t$. Here $\omega_t$ denotes $\omega$ expressed in terms of local holomorphic 
coordinates $w^1, \cdots, w^m$ of the complex structure $J_t$. Thus,
$$ - \frac{\partial^2}{\partial w^\alpha \partial \overline{w^\beta}} \log \det (e^{f_t}g_t) = g_{\alpha\barbet}.$$
Then using \eqref{p3}, \eqref{p7} and \eqref{p4} we obtain
$$ T_i\lb\overline{\frac{\partial z^j}{\partial w^\beta}}\,T_\barj\,\log\det(e^{f_t}g_t)\rb = - \overline{b^j{}_\beta}\, g_{i\barj}.$$
Further using \eqref{p4} and \eqref{c3} we obtain
\begin{eqnarray}\label{new9}
&&\overline{\lb(I-\varphi\barvarphi)^{-1}\rb^j{}_k}T_iT_\barj\,\log\det(e^{f_t}g_t) 
+ T_i\overline{\lb(I-\varphi\barvarphi)^{-1}\rb^j{}_k}\,T_\barj\,\log\det(e^{f_t}g_t) \nonumber\\
&&\qquad\qquad -\ \overline{\partial_k\varphi^\ell{}_\bari} \overline{\lb(I-\varphi\barvarphi)^{-1}\rb ^j{}_\ell}\, T_\barj \log\det (e^{f_t}g_t) = - g_{i\bark}.
\end{eqnarray}
Taking the derivative of \eqref{new9} with respect to $t$ at $t=0$ we obtain
\begin{eqnarray}\label{new10}
&&\frac{\partial^2}{\partial z^i\partial z^\bark} \lb f^\prime(0) + \left.\frac{d}{dt}\right|_{t=0} \log \det g_t\rb \nonumber\\
&& = \frac{\partial}{\partial z^i}\lb \psi^p{}_\bark\,\frac{\partial}{\partial z^p}(\log\det e^{f_0}\,g_0)\rb
 +  \frac{\partial}{\partial z^\bark}\lb \overline{\psi^q{}_\bari}\,\frac{\partial}{\partial z^\barq}(\log\det e^{f_0}\,g_0)\rb.
\end{eqnarray}
Using \eqref{p7} we have
\begin{equation*}
\left.\frac{d}{dt}\right|_{t=0} \log \det g_t = - \delta^j{}_\alpha\,\frac{\partial}{\partial z^j}\lb\left.\frac{\partial w^\alpha}{\partial t}\right|_{t=0}\rb
- \overline{\delta^j{}_\alpha\,\frac{\partial}{\partial z^j}\lb\left.\frac{\partial w^\alpha}{\partial t}\right|_{t=0}\rb}\ ,
\end{equation*}
and further using \eqref{coor} (or \eqref{h7}) we obtain
\begin{equation}\label{new11}
\frac{\partial^2}{\partial z^i\partial z^\bark} \lb\left.\frac{d}{dt}\right|_{t=0} \log \det g_t\rb
= - \partial_i\partial_\ell \psi^\ell{}_\bark - \partial_\bark\partial_\barl \overline{\psi^\ell{}_\bari}.
\end{equation}
Then \eqref{new10} and \eqref{new11} imply that
\begin{eqnarray}\label{new12}
\frac{\partial^2}{\partial z^i\partial z^\bark} f^\prime(0) &=& \partial_i (\diver_{f_0}\psi)_\bark + \partial_\bark\overline{(\diver_{f_0}\psi)_\bari}\nonumber\\
&=& 2 \partial_i\partial_\bark \Re \xi_\psi.
\end{eqnarray}
Since $\int_{M_0} f^\prime(0) \Omega_0 = 0$ and $\int_{M_0} \xi_\psi \Omega_0 = 0$ we obtain from \eqref{new12}
$$ f^\prime(0) = 2 \Re \xi_\psi.$$
This completes the claim \eqref{new8}.
It follows that, the map $\mu$ being the moment map is equivalent to 
\[
\int_{M_0}\xi_\psi u\,\Omega_0=- \int_{M_0}\lb \tr (\psi\cdot\overline{\bar\pa \nabla^\prime u} ) - \xi_\psi (u-\Delta_{f_0}u)\rb\Omega_0.
\]
The right side of the above equation is 
\begin{align*}
\begin{split}
& - \int_{M_0}\lb \tr (\psi\cdot\overline{\bar\pa \nabla^\prime u} ) - \xi_\psi (u-\bar\Delta_{f_0}u)\rb\Omega_0\\
=& - \int_{M_0} \lb \psi_{\bar j}^i \pa_i\lb g^{k\bar j}\pa_k u\rb\rb \Omega_0+ \int_{M_0} \lb \xi_\psi\lb u-\bar\Delta_{f_0}u\rb\rb\Omega_0\\
=&  \int_{M_0}\lb \dd_{f_0}\psi\rb_{\bar j}g^{k\bar j}\pa_k u\,\Omega_0+ \int_{M_0}\xi_\psi\,u\,\Omega_0- \int_{M_0}\xi_\psi\bar\Delta_{f_0}u\,\Omega_0\\
=&  \int_{M_0}\pa_{\bar j}\xi_\psi g^{k\bar j}\pa_k u\, \Omega_0+ \int_{M_0}\xi_\psi u\,\Omega_0- \int_{M_0}\xi_\psi\bar\Delta_{f_0}u\,\Omega_0\\
=& \int_{M_0}\xi_\psi\bar\Delta_{f_0}u\,\Omega_0+ \int_{M_0}\xi_\psi u\,\Omega_0- \int_{M_0}\xi_\psi\bar\Delta_{f_0}u\,\Omega_0\\
=& \int_{M_0}\xi_\psi u\,\Omega_0\,. 
\end{split}
\end{align*}
This completes the proof of Proposition \ref{new7}.
\end{proof}

\section{Coupled K\"ahler metrics}
Let $M$ be a Fano manifold of complex dimension $m$. Suppose that we are given a decomposition of the anti-canonical bundle
$$ K_M^{-1} = L_1 \otimes \cdots \otimes L_k$$
into ample line bundles $L_\ga$'s. 
Let
$$ \omega_\ga = \sqrt{-1} g_{\ga i\barj}\, dz^i \wedge dz^\barj$$
be a K\"ahler form representing $2\pi c_1(L_\alpha)$. 
We denote by $\Delta_\ga$, $\nabla_\ga$ and $\Ric(g_\ga)$ the Laplacian, the covariant derivative and the Ricci curvature
of $g_\ga$ respectively.
Let $f_\alpha$ be the smooth function on $M$
such that 
$$ \Ric(g_\ga) = \omega_1 + \cdots + \omega_k + \sqrt{-1}\partial\barpartial f_\ga$$
with the normalization
$$ e^{f_1} \omega_1^m = \cdots = e^{f_k} \omega_k^m =: dV.$$
We call $\omega_1,\, \cdots,\ \omega_k$ {\it coupled K\"ahler metrics,} and
in this section we extend Theorem \ref{ThmA}, Theorem \ref{ThmB} and Corollary \ref{CorC} in this coupled setting.

As in \eqref{Boch11}, the Bochner-Kodaira formula reads in this setting
\begin{eqnarray} \label{Boch21}
(\Delta_{\ga, f_\ga}\,\eta)_{\barj_1\cdots\barj_q} &=& - g_{\ga}^{i\barj}(\nabla_{\ga, f_\ga})_i(\nabla_{\ga})_\barj\, \eta_{\barj_1\cdots\barj_q}\\
&& + \sum_{\beta=1}^q (R_{i \barj_\beta}(g_\ga) - f_{\ga\,i\barj_\beta})g_\ga^{i\barl}\,\eta_{\barj_1 \cdots \barj_{\beta -1}\barl\barj_{\beta+1}\cdots \barj_q}\nonumber\\
&=& - g_{\ga}^{i\barj}(\nabla_{\ga, f_\ga})_i(\nabla_{\ga})_\barj\, \eta_{\barj_1\cdots\barj_q}\nonumber\\
&& + \sum_{\beta=1}^q (\sum_{\gamma=1}^k g_{\gamma\, i \barj_\beta})\,g_\ga^{i\barl}\,\eta_{\barj_1 \cdots \barj_{\beta -1}\barl\barj_{\beta+1}\cdots \barj_q}
 \nonumber
\end{eqnarray}
where 
\begin{eqnarray} \label{Boch22}
(\nabla_{\ga,f})_i = \nabla_{\ga\,i} + f_{\ga\,i}.
\end{eqnarray}

For $(0,q)$-forms
$$ \eta_\alpha = \eta_{\alpha \barj_1 \cdots \barj_q} dz^{\barj_1} \wedge \cdots \wedge dz^{\barj_q}, \quad \alpha = 1, \cdots, k,$$
we put
$$ \eta_\alpha^\sharp := g_\alpha^{i\barj_1}\, \eta_{\alpha \barj_1 \cdots \barj_q} 
\frac{\partial}{\partial z^i}\otimes dz^{\barj_2} \wedge \cdots \wedge dz^{\barj_q}.$$
If we assume
\begin{equation}\label{Boch23}
\eta_1^\sharp = \cdots = \eta_k^\sharp := \psi,
\end{equation}
then from \eqref{Boch21} we have
\begin{eqnarray} \label{Boch24}
(\Delta_{\ga, f_\ga}\,\eta_\ga)_{\barj_1\cdots\barj_q}
= - g_{\ga}^{i\barj}(\nabla_{\ga, f_\ga})_i(\nabla_{\ga})_\barj\, \eta_{\ga \barj_1\cdots\barj_q}\nonumber 
+ q \sum_{\gamma=1}^k \eta_{\gamma\, \barj_1 \cdots \barj_q}.
\end{eqnarray}
From this we obtain the following Theorem \ref{ThmH}, Theorem \ref{ThmI} and Corollary \ref{CorJ} by similar proofs to Theorem \ref{ThmA},
Theorem \ref{ThmB} and Corollary \ref{CorC}.

\begin{theorem}\label{ThmH} (1)\ \ Suppose that \eqref{Boch23} holds. If 
\begin{equation}\label{Boch25}
\Delta_{\ga f_\ga}\,\eta_\ga = \lambda \sum_{\gamma=1}^k \eta_\gamma
\end{equation}
for some $\alpha$ 
 and $\eta_\alpha \ne 0$ then $\lambda \ge q$.\\
(2)\ \ If, in (1), $\lambda = q$ then $\nabla_\ga^{\prime\prime}\eta_\ga = 0$. In particular $\barpartial \eta_\ga = 0$, and
also
$$ \eta_{\alpha \barj_1\cdots\barj_q} g_\ga^{i_1\barj_1} \cdots g_\ga^{i_q\barj_q} 
\frac{\partial}{\partial z^{i_1}}\wedge \cdots \wedge \frac{\partial}{\partial z^{i_q}} 
$$
is a holomorphic section of $\wedge^q T^\prime M$. \\
\end{theorem}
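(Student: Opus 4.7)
The plan is to repeat the argument for Theorem \ref{ThmA}, with the coupled Bochner-Kodaira identity \eqref{Boch24} playing the role of \eqref{Boch11}. Assuming the eigenvalue relation \eqref{Boch25}, I would first rearrange \eqref{Boch24} into
\[
(\lambda-q)\sum_{\gamma=1}^k \eta_{\gamma\,\barj_1\cdots\barj_q} = -\,g_\alpha^{i\barj}(\nabla_{\alpha,f_\alpha})_i(\nabla_\alpha)_\barj\,\eta_{\alpha\,\barj_1\cdots\barj_q}
\]
and then pair both sides with $\eta_\alpha$ in the Hermitian metric $g_\alpha$ and integrate against the common weighted volume form $dV=e^{f_\alpha}\omega_\alpha^m$. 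As in the uncoupled case, integration by parts using that $-g_\alpha^{i\barj}(\nabla_{\alpha,f_\alpha})_i$ is the formal adjoint of $(\nabla_\alpha)_\barj$ with respect to $dV$ turns the right-hand side into $\|\nabla_\alpha''\eta_\alpha\|_{L^2}^2\ge 0$.

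The new point is to control the sign of the scalar multiplying $(\lambda-q)$ on the left, namely $\int_M \sum_\gamma \langle \eta_\gamma,\eta_\alpha\rangle_{g_\alpha}\, dV$. Here I would invoke the sharp hypothesis \eqref{Boch23} to write every $\eta_\gamma$ as $\eta_{\gamma\,\barj_1\cdots\barj_q} = g_{\gamma\,k\barj_1}\,\psi^k{}_{\barj_2\cdots\barj_q}$, insert this into the Hermitian pairing, and observe that the factor $g_\alpha^{i_1\barj_1}$ contracted with the lowering $g_{\alpha\,i_1\barl}$ coming from $\overline{\eta_\alpha}$ telescopes to $\delta^{\barj_1}{}_{\barl}$. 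Summing over $\gamma$ then yields
\[
\sum_{\gamma=1}^k \langle \eta_\gamma,\eta_\alpha\rangle_{g_\alpha} = \Bigl(\sum_{\gamma=1}^k g_{\gamma\,k\barl}\Bigr)\, g_\alpha^{i_2\barj_2}\cdots g_\alpha^{i_q\barj_q}\, \psi^k{}_{\barj_2\cdots\barj_q}\,\overline{\psi^\ell{}_{\bari_2\cdots\bari_q}},
\]
which is a nonnegative Hermitian quadratic form in $\psi$ because $\sum_\gamma g_\gamma$ and $g_\alpha$ are each positive definite. It vanishes precisely when $\psi\equiv 0$, equivalently when $\eta_\alpha\equiv 0$. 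Combined with the Bochner identity this forces $\lambda\ge q$, proving part (1).

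For part (2), equality $\lambda=q$ immediately yields $\nabla_\alpha''\eta_\alpha = 0$, and skew-symmetrization in the anti-holomorphic indices gives $\barpartial\eta_\alpha = 0$. To verify that the contravariant multivector $\eta_{\alpha\,\barj_1\cdots\barj_q}\, g_\alpha^{i_1\barj_1}\cdots g_\alpha^{i_q\barj_q}\,\partial_{z^{i_1}}\wedge\cdots\wedge\partial_{z^{i_q}}$ is a holomorphic section of $\wedge^q T^\prime M$, I would compute its $\barpartial$ in the local holomorphic frame: Kählerness of $g_\alpha$ gives $\nabla_\alpha g_\alpha^{-1}=0$, so the inverse-metric factors are parallel and the $\barpartial$-derivative of the coefficient reduces to $g_\alpha^{i_1\barj_1}\cdots g_\alpha^{i_q\barj_q}(\nabla_\alpha)_\bark\,\eta_{\alpha\,\barj_1\cdots\barj_q}$, which is zero by $\nabla_\alpha''\eta_\alpha=0$.

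The main obstacle is the algebraic identity in the middle paragraph: all of the positivity hinges on the telescoping induced by the sharp condition \eqref{Boch23}, which is what lets us rewrite the sum $\sum_\gamma \langle\eta_\gamma,\eta_\alpha\rangle_{g_\alpha}$ as a genuine squared norm of the single object $\psi = \eta_\alpha^\sharp$ with respect to the positive-definite combined metric $\sum_\gamma g_\gamma$. Once this identity is in place, the rest of the argument is a routine adaptation of the proofs of Theorem \ref{ThmA} and Theorem \ref{ThmB}.
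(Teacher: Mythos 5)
Your proof is correct and follows essentially the route the paper intends: the authors simply assert that Theorem \ref{ThmH} follows from the coupled Bochner--Kodaira identity \eqref{Boch24} ``by a similar proof to Theorem \ref{ThmA}'', i.e.\ by pairing the eigenvalue equation with $\eta_\alpha$ against the common volume form $dV$ and integrating by parts. You have also correctly identified and verified the one genuinely new ingredient that the paper leaves implicit, namely that the hypothesis \eqref{Boch23} makes $\sum_{\gamma}\langle\eta_\gamma,\eta_\alpha\rangle_{g_\alpha}$ a pointwise nonnegative Hermitian form in $\psi=\eta_\alpha^\sharp$, vanishing only where $\eta_\alpha$ does, which is what lets the sign argument go through.
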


\begin{theorem}\label{ThmI} (1)\ \ Suppose that 
\begin{equation*}
(\barpartial\eta_1)^\sharp = \cdots =(\barpartial \eta_k)^\sharp
\end{equation*}
holds where 
$$ (\barpartial\eta_\alpha)^\sharp := g_\alpha^{i\barj_0}\, (\barpartial\eta)_{\alpha \barj_0 \cdots \barj_q} 
\frac{\partial}{\partial z^i}\otimes dz^{\barj_1} \wedge \cdots \wedge dz^{\barj_q}.$$
If 
\begin{equation}\label{Boch26}
\Delta_{\ga f_\ga}\,\eta_\ga = \lambda \sum_{\gamma=1}^k \eta_\gamma
\end{equation}
for some $\alpha$ 
 and $\barpartial\eta_\alpha \ne 0$ then $\lambda \ge q+1$.\\
(2)\ \ If, in (1), $\lambda = q+1$ then $\nabla_\ga^{\prime\prime}\barpartial\eta_\ga = 0$. In particular
also
$$ (\barpartial\eta_{\alpha})_{\barj_0\cdots\barj_q}\, g_\ga^{i_0\barj_0} \cdots g_\ga^{i_q\barj_q} 
\frac{\partial}{\partial z^{i_0}}\wedge \cdots \wedge \frac{\partial}{\partial z^{i_q}} 
$$
is a holomorphic section of $\wedge^q T^\prime M$. 
\end{theorem}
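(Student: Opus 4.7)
The plan is to reduce Theorem \ref{ThmI} to Theorem \ref{ThmH} by applying $\barpartial$ to the eigenvalue equation and working with the $(0,q+1)$-forms $\barpartial\eta_\alpha$. The key observation is that since $\Delta_{\alpha f_\alpha}=\barpartial\,\barpartial^\ast_{f_\alpha}+\barpartial^\ast_{f_\alpha}\barpartial$ and $\barpartial^2=0$, we have $\Delta_{\alpha f_\alpha}\barpartial=\barpartial\Delta_{\alpha f_\alpha}$. Applying $\barpartial$ to \eqref{Boch26} therefore yields
\begin{equation*}
\Delta_{\alpha f_\alpha}(\barpartial\eta_\alpha)=\lambda\sum_{\gamma=1}^k\barpartial\eta_\gamma,
\end{equation*}
so the family $\{\barpartial\eta_\alpha\}_{\alpha=1}^k$ of $(0,q+1)$-forms satisfies the coupled eigenvalue equation with the same eigenvalue $\lambda$.

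Next I would invoke the hypothesis $(\barpartial\eta_1)^\sharp=\cdots=(\barpartial\eta_k)^\sharp$, which is precisely the compatibility condition \eqref{Boch23} needed to apply Theorem \ref{ThmH} in degree $q+1$ to the tuple $\{\barpartial\eta_\alpha\}$. Since $\barpartial\eta_\alpha\ne 0$ by assumption, part (1) of Theorem \ref{ThmH} (with $q$ replaced by $q+1$) gives $\lambda\ge q+1$. This proves part (1) of Theorem \ref{ThmI}.

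For part (2), equality $\lambda=q+1$ together with part (2) of Theorem \ref{ThmH} applied to $\barpartial\eta_\alpha$ yields $\nabla_\alpha^{\prime\prime}(\barpartial\eta_\alpha)=0$, and the associated contraction
\begin{equation*}
(\barpartial\eta_\alpha)_{\barj_0\cdots\barj_q}\,g_\alpha^{i_0\barj_0}\cdots g_\alpha^{i_q\barj_q}\,\frac{\partial}{\partial z^{i_0}}\wedge\cdots\wedge\frac{\partial}{\partial z^{i_q}}
\end{equation*}
is a holomorphic polyvector field, as desired.

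There is essentially no obstacle here beyond verifying that the commutation $\barpartial\Delta_{\alpha f_\alpha}=\Delta_{\alpha f_\alpha}\barpartial$ holds for each weighted Laplacian (which is immediate from the definition) and that the sharp condition is preserved when promoting to $(0,q+1)$-forms (which is the content of the hypothesis). The only point worth flagging is that the statement in the conclusion of Theorem \ref{ThmI} refers to $\wedge^q T^\prime M$, whereas $\barpartial\eta_\alpha$ has $q+1$ antiholomorphic indices and so its contraction naturally lands in $\wedge^{q+1}T^\prime M$; this appears to be a harmless misprint, consistent with the passage from Theorem \ref{ThmA} to Theorem \ref{ThmB} in the uncoupled case.
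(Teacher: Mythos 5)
Your proposal is correct and follows essentially the same route as the paper: the authors state that Theorem \ref{ThmI} is obtained "by similar proofs to ... Theorem \ref{ThmB}," whose proof is exactly your argument — commute $\barpartial$ with the weighted Laplacian, apply the degree-$(q+1)$ version of the coupled eigenvalue estimate (Theorem \ref{ThmH}) to the tuple $\{\barpartial\eta_\gamma\}$, for which the stated sharp-compatibility hypothesis plays the role of \eqref{Boch23}. Your observation that the conclusion should read $\wedge^{q+1}T^\prime M$ rather than $\wedge^{q}T^\prime M$ is also correct and consistent with the uncoupled case.
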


\begin{corollary}[Theorem 1.2 in \cite{FutakiZhang18}]\label{CorJ}
If non-constant complex valued smooth functions $u_1, \dots, u_k$ satisfy
\begin{enumerate}
\item[(a)] $\grad^\prime_\ga u_\ga=\grad^\prime_\gb u_\gb$,\ \ $\ga, \gb=1, 2, \dots, k$
where
$$ \grad^\prime_\ga u_\ga = g^{i\barj}_\ga \nabla_\barj u_\ga \frac{\partial}{\partial z^i};$$
\item[(b)] $\Delta_{\ga, f_\ga} u_\ga=\lambda\sum\limits_{\gb=1}^k u_\gb$,\ \ for $\ga=1, 2, \dots, k$,
\end{enumerate}
then $\lambda\ge 1$. Moreover if $\lambda=1$, the complex vector field $V=\grad^\prime_\ga u_\ga=\grad^\prime_\gb u_\gb$ is a holomorphic vector field.\\
\end{corollary}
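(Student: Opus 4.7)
The plan is to reduce Corollary \ref{CorJ} to Theorem \ref{ThmH} at $q = 1$, exactly paralleling how Corollary \ref{CorC} follows from Theorem \ref{ThmA} in the single-metric case. Accordingly, I would set $\eta_\ga := \barpartial u_\ga$ for $\ga = 1, \dots, k$; these are the $(0,1)$-forms on $M$ to which Theorem \ref{ThmH} will be applied.

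First I would verify the coupling hypothesis \eqref{Boch23}. With $\eta_\ga = \barpartial u_\ga$ one has $\eta_{\ga\,\barj} = \nabla_\barj u_\ga$, hence
$$\eta_\ga^\sharp \;=\; g_\ga^{i\barj}\,\nabla_\barj u_\ga\,\frac{\partial}{\partial z^i} \;=\; \grad^\prime_\ga u_\ga,$$
so hypothesis (a) of Corollary \ref{CorJ} is precisely the equality $\eta_1^\sharp = \cdots = \eta_k^\sharp$ required by Theorem \ref{ThmH}. Second, I would verify the eigenvalue equation \eqref{Boch25}. Because $\barpartial^2 = 0$, on functions the weighted Hodge Laplacian reduces to $\Delta_{\ga, f_\ga} = \barpartial_{f_\ga}^\ast \barpartial$ and commutes with $\barpartial$; so hypothesis (b) of Corollary \ref{CorJ} yields
$$\Delta_{\ga, f_\ga}\,\eta_\ga \;=\; \barpartial\,\Delta_{\ga, f_\ga} u_\ga \;=\; \barpartial\lb \lambda \sum_{\gb=1}^k u_\gb\rb \;=\; \lambda \sum_{\gb=1}^k \eta_\gb,$$
which is precisely \eqref{Boch25}. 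Third, since $u_\ga$ is non-constant on the compact K\"ahler manifold $M$, one has $\eta_\ga = \barpartial u_\ga \ne 0$: otherwise $u_\ga$ would be holomorphic, hence constant.

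With the three hypotheses checked, Theorem \ref{ThmH}(1) with $q = 1$ applied to any fixed $\ga$ immediately yields $\lambda \ge 1$. In the equality case $\lambda = 1$, Theorem \ref{ThmH}(2) tells us that
$$\eta_{\ga\,\barj}\, g_\ga^{i\barj}\,\frac{\partial}{\partial z^i} \;=\; \grad^\prime_\ga u_\ga \;=\; V$$
is a holomorphic section of $T^\prime M$, which is precisely the statement that $V$ is a holomorphic vector field.

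There is no substantive obstacle in this argument once Theorem \ref{ThmH} is available; the only point requiring a brief check is the commutation $[\Delta_{\ga, f_\ga}, \barpartial] = 0$ on functions, which follows at once from $\barpartial^2 = 0$ and the definition of the weighted Hodge Laplacian. Everything else is a direct specialization of the coupled Bochner-Kodaira setup at degree $q = 1$.
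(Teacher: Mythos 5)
Your proposal is correct and is essentially the paper's own route: the paper obtains Corollary \ref{CorJ} as the $q=0$ case of Theorem \ref{ThmI}, and Theorem \ref{ThmI} is itself proved (in parallel with Theorem \ref{ThmB}) by applying Theorem \ref{ThmH} to $\barpartial\eta_\ga$ via the commutation $\barpartial\Delta_{\ga,f_\ga}=\Delta_{\ga,f_\ga}\barpartial$ --- which is exactly your application of Theorem \ref{ThmH} at $q=1$ to $\eta_\ga=\barpartial u_\ga$. All the verifications (hypothesis (a) giving \eqref{Boch23}, hypothesis (b) giving \eqref{Boch25}, and non-constancy giving $\barpartial u_\ga\ne 0$) are as in the paper.
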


Of course Corollary \ref{CorJ} is the case of $q=0$ of Theorem \ref{ThmI}. It may be tempting to extend the result of \cite{CaoSunYauZhang2022}
for coupled K\"ahler-Einstein metrics introduced by \cite{HultgrenWittNystrom18}. We may use the Kuranishi gauge for one of the K\"ahler metrics,
say $g_1$, among $g_1,\ \cdots,\ g_k$, and it is possible to show that $\omega_1 + \cdots + \omega_k$ remains to be a K\"ahler form
for the members of the Kuranishi family using the case of $q=2$ of Theorem \ref{ThmH}. However, it is not clear whether the individual
$\omega_\alpha$ remains to be a K\"ahler form. Just as in the case of the volume minimization for transverse coupled K\"ahler-Einstein metrics
on Sasaki manifolds (c.f. \cite{futaki22}), straightforward extensions to the coupled case are not always possible.

\end{document}